\theoremstyle{plain}
\newtheorem{theorem}{Theorem}[section]
\newtheorem{proposition}[theorem]{Proposition}
\newtheorem{lemma}[theorem]{Lemma}
\newtheorem{corollary}[theorem]{Corollary}
\theoremstyle{definition}
\newcommand{\appsection}[1]{\let\oldthesection\thesection
\renewcommand{\thesection}{Appendix \oldthesection}
\section{#1}\let\thesection\oldthesection}
\newtheorem{definition}[theorem]{Definition}
\theoremstyle{remark}
\newtheorem{remark}[theorem]{Remark}
\newtheorem{example}[theorem]{Example}
\def\F{{\mathbb{F}}}
\def\D{{\mathbb{D}}}
\def\Z{{\mathbb{Z}}}
\def\Q{{\mathbb{Q}}}
\def\C{{\mathbb{C}}}
\def\P{{\mathbb{P}}}
\def\O{{\mathcal{O}}}
\def\X{{\mathcal{X}}}
\def\Y{{\mathcal{Y}}}
\def\W{{\mathcal{W}}}
\def\DD{{\mathcal{D}}}
\newcommand{\eni}{mk1A}
\newcommand{\enii}{mk2A}
\begin{document}
\bibliographystyle{amsplain}
\title[Construcci\'on]{$\Q$-Gorenstein smoothings of surfaces and degenerations of curves}
\author{\textrm{Giancarlo Urz\'ua}}

\email{urzua@mat.puc.cl}

\maketitle

\begin{abstract}
In this paper we mainly describe $\Q$-Gorenstein smoothings of projective surfaces with only Wahl singularities which have birational fibers. For instance, these degenerations appear in normal degenerations of $\P^2$, and in boundary divisors of the KSBA compactification of the moduli space of surfaces of general type \cite{KSB88}. We give an explicit description of them as smooth deformations plus $3$-fold birational operations, through the flips and divisorial contractions in \cite{HTU13}. We interpret the continuous part (smooth deformations) as degenerations of certain curves in the general fiber. At the end, we work out examples happening in the KSBA boundary for invariants $K^2=1$, $p_g=0$, and $\pi_1=0$ using plane curves.
\end{abstract}

\section{Introduction} \label{intro}

$\Q$-Gorenstein smoothings are interesting degenerations of nonsingular surfaces. For instance, stable limits of nonsingular surfaces in the KSBA compactification of the moduli space of surfaces of general type \cite{KSB88} are $\Q$-Gorenstein smoothings. On the other hand, they provide a non-classical construction of nonsingular projective surfaces with fixed invariants by means of certain singular surfaces; this is the pioneering work of Y. Lee and J. Park \cite{LP07}. They are also present in degenerations of other type of surfaces \cite{Kaw92,Man91,HP10,Prok11}.

Let $\D$ be a smooth analytic germ of curve. We are interested in $\Q$-Gorenstein smoothings over $\D$ of irreducible projective surfaces $X$ with only quotient singularities. In this case the $\Q$-Gorenstein condition means that the canonical class of the corresponding $3$-fold is $\Q$-Cartier (see \cite{Hack11} for a general discussion). The singularities of $X$ are the so-called T-singularities \cite{KSB88}: they are either du Val singularities, or cyclic quotient singularities $\frac{1}{dn^2}(1,dna-1)$ with gcd$(n,a)=1$. For du Val singularities we have simultaneous resolution. For the others we have particular partial simultaneous resolution (given by the M-resolutions of \cite{BC94}) which has as special fiber a surface with only \emph{Wahl singularities}, this is, non du Val T-singularities with $d=1$ \cite[(5.9.1)]{Wahl81}. We remark that Wahl singularities are the log terminal singularities which have a rational homology disk smoothing. It turns out that $\Q$-Gorenstein smoothings over $\D$ of projective surfaces with only Wahl singularities have some common characteristics with nonsingular projective surfaces. In \S \ref{s1} we make this precise, showing their MMP with explicit birational operations as in \cite{HTU13}, minimal and canonical models, and some numerical invariants.

In general, it is not clear what produces that a nonsingular projective surface admits a $\Q$-Gorenstein degeneration with only Wahl singularities. This type of degenerations have no vanishing cycles. In \cite{Kaw92}, Kawamata describes $\Q$-Gorenstein degenerations when the general fiber has Kodaira dimension $0$ or $1$, giving a reason for the existence using elliptic fibrations. Hacking \cite{Hack11e} discusses it for surfaces of general type with geometric genus $0$ via exceptional vector bundles. We recall that the Kodaira dimension of the minimal resolution of the special fiber and of the general fiber may be different. For example, the special fiber could be rational and the general fiber could be of general type. This paper is mainly about $\Q$-Gorenstein smoothings over $\D$ of projective surfaces with only Wahl singularities so that the special and general fiber are birational. (It is actually more general, see Theorem \ref{birWsurf}.) These degenerations appear, for example, when studying normal degenerations of $\P^2$ \cite{Man91,HP10}, and when studying KSBA boundary in the moduli space of surfaces of general type with $p_g=0$ (cf. \cite{Urz13}). In this direction, Corollary \ref{birWsurfA} says: any birational $\Q$-Gorenstein smoothing comes from a smooth deformation (continuous part) followed by certain specific birational $3$-fold operations (discrete part). These operations are the explicit flips and divisorial contractions in \cite{HTU13}. A distinguished non-trivial case in this theorem is normal degenerations of $\P^2$, which is treated separately in \S \ref{s2} (see Example \ref{e1}).

In \S \ref{s3}, we look closer at the above ``continuous part". We interpret the deformation as degeneration of certain curves. The work with curves is possible because $\Q$-Gorenstein smoothings with only Wahl singularities happen on a $\Q$-factorial $3$-fold, and so we have intersection theory. This is a general property induced by normal surface singularities with a rational homology disk smoothing \cite[Prop.3.1]{Wahl11}. Using this we will keep track of curve degenerations after each flip or divisorial contraction; for the precise picture see \S \ref{s3}. In that section  we show conditions to produce many examples (Propositions \ref{obstr1} and \ref{obstr2}), and we give two concrete ones starting with $\P^2$ (Examples \ref{lines} and \ref{halphen}).

Our main motivation is to explore the KSBA boundary of the moduli space of simply connected surfaces of general type with $p_g=0$. The above continuous part gives something to work with when some boundary divisors, which parametrize rational singular surfaces, intersect. This continuous part gives degenerations of rational plane curves. Their parameters give the moduli for the divisors, and their further degenerations produce more divisors. In particular, this is an explicit description of the surfaces in any constructed Wahl divisor $\DD{n \choose a}$; cf. \cite{Urz13}. These plane curves may be irreducible, reducible but reduced, or reducible and nonreduced; some degenerations could give different families in the same moduli space (so the discrete part is now varying). In \S \ref{s4} we show four examples which give a first look at this way of describing KSBA boundary. All of them are simply connected, $K^2=1$, and $p_g=0$.

\subsection*{Notation} We use definitions, notations, and facts from \cite{HTU13}, \cite[Preliminaries]{Urz13}. As in \cite{Urz13}, the k1A and k2A extremal nbhds in \cite{HTU13} are denoted by \eni ~and \enii. Our ground field is $\C$.

\subsection*{Acknowledgements}

I have benefited from many conversations with Paul Hacking and Jenia Tevelev. I was supported by the FONDECYT Inicio grant 11110047 funded by the Chilean Government.

\tableofcontents

\section{W-surfaces and their MMP} \label{s1}

Let $X$ be a normal surface with only quotient singularities, and let $(0 \in \D)$ be a \emph{smooth analytic germ of curve}. A deformation $(X \subset \X) \rightarrow (0 \in \D)$ of $X$ is called a \emph{smoothing} if its general fiber is smooth. It is {\em $\Q$-Gorenstein} if $K_{\X}$ is $\Q$-Cartier. A germ of a normal surface $X$ is called a T-singularity if it is a quotient singularity and admits a $\Q$-Gorenstein smoothing. By \cite[Prop.3.10]{KSB88}, any T-singularity is either a du Val singularity or a cyclic quotient singularity of the form
${1\over dn^2}(1,dna-1)$ with gcd$(n,a)=1$. A T-singularity with a one-dimensional $\Q$-Gorenstein versal deformation space is either a node $A_1$ or a {\em Wahl
singularity} ${1\over n^2}(1,na-1)$. Hence Wahl singularities are precisely the T-singularities whose $\Q$-Gorenstein smoothing has Milnor number zero.

\begin{definition}
A {\em W-surface} is a normal projective surface $X$ together with a proper deformation $(X \subset \X) \to (0 \in \D)$ such that
\begin{enumerate}
\item $X$ has at most Wahl singularities.
\item $\X$ is a normal complex $3$-fold with $K_{\X}$ $\Q$-Cartier.
\item The fiber $X_0$ is reduced and isomorphic to $X$.
\item The fiber $X_t$ is nonsingular for $t\neq 0$.
\end{enumerate}
The W-surface is said to be {\em smooth} if $X$ is nonsingular.
\label{wsurf}
\end{definition}

These situation coincides with the moderate degenerations in \cite{Kaw92}. Notice that a W-surface has $\X$ terminal \cite[Cor.3.6]{KSB88} and $\Q$-factorial. The later assertion comes from the fact that $\X \to \D$ is a $\Q$-Gorenstein smoothing of singularities with Milnor number zero; see \cite[2.2.7]{Ko91}, or \cite[Prop.3.1]{Wahl11} in more generality. We use proper and not projective deformations since we will perform flips on such $\X$ which involve deformations of partial resolutions of singularities.

We point out that one produces many examples of W-surfaces in the following way; see \cite[(6.4)]{Wahl81}, \cite[\S1]{Man91}, \cite[\S2]{LP07}. One constructs a normal projective surface $X$ with only Wahl singularities and $H^2(X,T_X)=0$. Then local deformations glue to global deformations of $X$, and we consider the ones coming from $\Q$-Gorenstein smoothings of the singularities. This gives a W-surface. (For technical details in higher generality see \cite[\S3]{Hack11}.)

The aim of this section is to highlight similarities between W-surfaces and smooth projective surfaces via the general fiber. Let $X$ be a W-surface. Then, we have that $K_{X_t} = K_{\X}|_{X_t}$ for all $t$. Also the invariants $K_{X_t}^2$, $\chi_{\text{top}}(X_t)$, $q(X_t)=\text{dim}_{\C}H^1(X_t,\O_{X_t})$ (in general, see \cite{GS83}), and $p_g(X_t)=\text{dim}_{\C}H^2(X_t,\O_{X_t})$ are independent of $t$; cf. \cite{Kaw92}. A key property, which will be used frequently, is that the $3$-fold is $\Q$-factorial, which allows us to compare curves in the general and special fibers through intersection theory.

\begin{definition}
A W-surface $X$ is {\em minimal} if $K_X$ is nef. 
\end{definition}

\begin{lemma}
If a W-surface $X$ is minimal, then $K_{X_t}$ is nef for all $t$. 
\end{lemma}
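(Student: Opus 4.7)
The plan is to use the $\Q$-factoriality of $\X$ (noted in the paragraph after Definition \ref{wsurf}) together with the fact that all fibers of $\X\to\D$ are linearly equivalent Cartier divisors on $\X$, so that intersection numbers on fibers can be computed uniformly in $\X$. I would show nefness of $K_{X_t}$ by a curve-by-curve argument, transferring any hypothetical negative test curve in $X_t$ to an effective cycle on the special fiber $X=X_0$.

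Concretely, fix $t\neq 0$ and let $C\subset X_t$ be an irreducible curve; it suffices to show $K_{X_t}\cdot C\ge 0$. Let $\bar C\subset \X$ be the closure of $C$; this is an irreducible $2$-dimensional closed analytic subvariety of $\X$, hence a Weil divisor, and because $\X$ is $\Q$-factorial it is a $\Q$-Cartier divisor. On the other hand $X_0$ and $X_t$ are fibers of the flat proper morphism $\X\to\D$, so they are linearly equivalent Cartier divisors on $\X$. Therefore the triple intersection $K_{\X}\cdot \bar C\cdot X_s$ is independent of $s\in\D$, and in particular
\[
K_{\X}\cdot \bar C\cdot X_t \;=\; K_{\X}\cdot \bar C\cdot X_0.
\]

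Now I would identify each side. Using $K_{\X}|_{X_s}=K_{X_s}$ (recalled in the paragraph before the definition of minimality) and the fact that the cycle-theoretic restriction $\bar C\cdot X_t$ equals $C$ while $\bar C\cdot X_0=:D_0$ is an effective $\Q$-cycle on $X_0=X$, the displayed equality becomes
\[
K_{X_t}\cdot C \;=\; K_{X}\cdot D_0.
\]
Since $X$ is minimal, $K_X$ is nef, and $D_0$ is effective, so the right-hand side is $\ge 0$. Hence $K_{X_t}\cdot C\ge 0$ for every irreducible curve $C\subset X_t$, proving $K_{X_t}$ is nef.

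The only delicate point is step two: one must make sure that the closure $\bar C$ of a curve in a non-projective but proper-over-$\D$ analytic $3$-fold is a genuine closed analytic subvariety of pure dimension $2$ whose cycle-theoretic fiber over $t$ recovers $C$ (rather than $C$ with extra embedded components). This is where the hypothesis that $\X\to\D$ is flat and proper, together with the $\Q$-factoriality coming from the Milnor-number-zero smoothing (\cite[Prop.~3.1]{Wahl11}), is essential — once these are in place, the rest is routine three-fold intersection theory.
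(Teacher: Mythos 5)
There is a genuine gap at the very first step, and it is precisely the gap that the paper's actual proof is designed to fill. You fix a single curve $C$ in a single fiber $X_t$ and pass to its ``closure $\bar C\subset\X$'', asserting that this is a $2$-dimensional subvariety, hence a Weil divisor. But $C$ is already a closed analytic subvariety of $\X$ (it is closed in $X_t$, which is closed in $\X$), so its closure is $C$ itself: a curve, not a divisor. A curve lying in one fiber does not sweep out a surface over $\D$ unless it actually moves in a family of curves parametrized by (a neighborhood of) $0\in\D$; in general it may be rigid, or may only deform over a punctured neighborhood of some other point. Without a genuine divisor $\bar C$ dominating $\D$, the identity $K_{\X}\cdot\bar C\cdot X_t=K_{\X}\cdot\bar C\cdot X_0$ has no content, and there is no effective cycle $D_0$ on $X_0$ to intersect $K_X$ with. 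Your closing remark treats this as a ``delicate point'' about embedded components, but the problem is more basic: the object does not exist unless you first produce the family.

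Producing that family is the entire difficulty, and the paper's proof shows why it cannot be done uniformly. The argument there assumes $K_{X_t}$ is not nef for some $t$, notes $K_{X_t}^2=K_X^2\ge 0$ and $b_1(X_t)$ even, and splits into two cases. When $X_t$ is of general type, it constructs a family of $(-1)$-curves over the \emph{whole} punctured disk (using Kodaira--Iitaka stability of $(-1)$-curves together with a simultaneous minimal model to rule out $(-1)$-curves escaping the exceptional locus), and only then takes the flat limit $\Gamma_0$ in $X$ to contradict nefness of $K_X$ --- this is the rigorous version of your transfer argument, and it needs the family as input. When $X_t$ is ruled, the relevant curve need not deform at all, and the paper instead argues via semicontinuity $h^0(X,-mK_X)\ge h^0(X_t,-mK_{X_t})>0$ (after reducing to the rational case, where the deformation is projective), contradicting nefness of the projective $K_X$ by producing sections of $-mK_X$. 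So your intersection-theoretic skeleton is the right instinct for one half of the proof, but the existence of the spreading-out family must be proved, not assumed, and the ruled case requires a different mechanism altogether.
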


\begin{proof}
Since $K_{X}$ is nef and $K_{X}^2=K_{X_t}^2$ for all $t$, we have $K_{X_t}^2 \geq 0$. Notice also that (by \cite[\S3]{GS83}) $q(X)=q(X_t)$ for all $t$, and so $b_1(X_t)$ is even since $X$ is projective with Wahl (and so rational) singularities. Suppose there is a $t\neq 0$ so that $K_{X_t}$ is not nef. Then since $K_{X_t}^2>0$ and $b_1(X_t)$ is even, we have that either $X_t$ is ruled or $X_t$ is a surface of general type \cite[p.91]{BHPV04}.

\textbf{(I)} Say that $X_t$ is ruled: Then $X_t$ must be rational since $K_{X_t}^2 \geq 0$. This is because a minimal model of $X_t$ has canonical class with self-intersection $8(1-q(X_t))$ and so $q(X_t)=1,0$. But if $q(X_t)=1$, then $X_t$ is a minimal minimal ruled surface since we must have $K_{X_t}^2=0$. Here, we can use \cite[\S3]{B86} to show that the minimal resolution $\widetilde{X}$ of $X$ must be a ruled surface as well. But then, since $q(\widetilde{X})=1$, we have that the exceptional divisor of the minimal resolution $\widetilde{X} \to X$ is contained in fibers of a fibration $\widetilde{X} \to E$, where $E$ is a curve of genus $1$. But then, the image of the general fiber of $\widetilde{X} \to E$ in $X$ is a $\P^1$ so that $\P^1 \cdot K_X=-2$, contradicting that $K_X$ is nef. Therefore, we are left with the case of a rational surface $X_t$. But then $q(X)=p_g(X)=0$, and so the deformation $(X \subset \X) \to (0 \in \D)$ is projective \cite[p.6]{Man91}. Now by \cite[Lemma(1.3)]{B86}, we have that $h^0(X,-mK_{X})\geq h^0(X_t,-mK_{X_t})$ for all $m>0$ divisible by the index of all Wahl singularities in $X$. But $h^0(X_t,-mK_{X_t})=h^1(X_t,-mK_{X_t}) + \frac{m(m+1)}{2}K_{X_t}^2 +1 >0$. On the other hand, $X$ is projective, and $K_X$ is nef, so $-mK_X$ cannot have global sections.

\textbf{(II)} Say that $X_t$ is of general type. Then one can prove the existence of a family of $(-1)$-curves $\Gamma_t$ degenerating to some effective divisor $\Gamma_0$ in $X$, which gives $K_{X_t} \cdot \Gamma_t=-1$ for $t \neq 0$, and so in the limit $K_{X} \cdot \Gamma_0 =-1$, which contradicts the fact that $K_X$ is nef. To see this family of $(-1)$-curves, we first notice that: if $X_{t_0}$ is of general type for some $t_0 \neq 0$, then all $X_t$ are of general type for $t \neq 0$ \cite[VI\S8]{BHPV04}. Now, by results of Kodaira and Iitaka \cite[IV\S4]{BHPV04}, there is a small disk $\D'$ in $\D \setminus \{ 0 \}$ around $t_0$ with a family of such $(-1)$-curves. We want to extend it to the whole disk. To prove this, it is enough to show that for a smooth deformation $(W \subset \W) \to (0 \in \D)$ of a smooth projective surface of general type $W$ there exists a smaller disk $\D'$ which contains $0$ such that all $(-1)$-curves in the neighbor fibers of $(W \subset \W') \to (0 \in \D')$ deform to a $(-1)$-curve in the central fiber $W$. First, to get this small disk $\D'$, we use several times the Kodaira and Iitaka results mentioned above to obtain a minimal model $W_m$ for $W$ together with a deformation $(W_m \subset \W_m) \to (0 \in \D')$. Notice that for general type surfaces, we have minimality if and only if $h^1(-K)=0$, and so the canonical class of every fiber in $(W_m \subset \W_m) \to (0 \in \D')$ is nef. Now restrict to the initial degeneration but over $\D'$: $(W \subset \W) \to (0 \in \D')$. Say that $\Gamma_t$ is a $(-1)$-curve in some fiber $W_t$ of $(W \subset \W) \to (0 \in \D')$ which does not belong to the exceptional divisor given by the contractions used to obtain $(W_m \subset \W_m) \to (0 \in \D')$. Then, its image ${\Gamma'}_t$ in the minimal model of $W_t$ (which is a fiber of $(W \subset \W) \to (0 \in \D')$) has ${\Gamma'}_t^2=-1+\sum m_i^2$ and arithmetic genus $p_a({\Gamma'}_t)=\sum \frac{m_i(m_i-1)}{2}$, for some integers $m_i \geq 0$, and so the intersection between ${\Gamma'}_t$ and the canonical divisor is negative by adjunction, a contradiction. Therefore $\Gamma_t$ belongs to the exceptional divisor, and this completes the proof.
\end{proof}

If $K_X$ is not nef, we run MMP \cite{KM1998} in the following way. First, following \cite[Thm.3.7]{KM1998} for example, there is a $K_X$-extremal rational curve $\Gamma$ (with $K_X \cdot \Gamma <0$). We have three options:

\vspace{0.3cm}
\textbf{(I)} If $\Gamma^2>0$, then Pic$(X)$ has rank $1$ and $-K_X$ is ample \cite[2.3.3]{KK}. Hence $-K_{X_t}$ is ample for any $t$ \cite[Prop.1.41]{KM1998}, and so $X_t$ is rational for any $t$. Moreover, the rank $1$ condition implies that $e(X_t)=3$ for all $t$, and so $X_t$ is isomorphic to $\P^2$. This type of degenerations of $\P^2$ were classified in \cite{Man91,HP10}. According to \cite[Cor.1.2]{HP10}, $X$ is a $\Q$-Gorenstein deformation of a weighted projective plane $\P(a,b,c)$ where $(a,b,c)$ satisfies the Markov equation $a^2+b^2+c^2=3abc$. In the next section we say more on those degenerations.

\vspace{0.3cm}
\textbf{(II)} If $\Gamma^2=0$, then there is a fibration $h \colon X \to B$ with irreducible fibers and general fiber isomorphic to $\P^1$ \cite[2.3.3]{KK}. Let $\tilde{h} \colon \tilde{X} \to T$ be the corresponding fibration on the minimal resolution $\tilde{X}$ of $X$. Then, over a $b \in B$ where the fiber has a Wahl singularity, the fiber in $\tilde{X}$ has two possible configuration types; see \cite[Prop.7.4]{HP10}. It is a simple check that none of them is possible when the singularities are of Wahl type. Therefore, $(X \subset \X) \to (0 \in \D)$ is a smooth deformation of a geometrically ruled surface $X$.

\vspace{0.3cm}
\textbf{(III)} If $\Gamma^2<0$, then we can apply to $(X \subset \X) \to (0 \in \D)$ a birational transformation defined by an extremal nbhd of type \eni ~or \enii ~of flipping or divisorial type; see \cite[Thm.5.3]{HTU13}. After that we arrive to another W-surface $(X^+ \subset \X^+) \to (0 \in \D)$. The fibers of $\X \to \D$ have changed in one of the following ways:

\textbf{(W-blow-down)} In the divisorial type, we have a birational morphism $X \to X^+$ contracting the curve $\Gamma=\P^1$ to a Wahl singularity. The minimal resolution of this Wahl singularity is obtained resolving minimally $X$ for singularities on $\Gamma$ and contracting the proper transform of $\Gamma$ and subsequent $(-1)$-curves (see \cite{HTU13} or \cite[\S1]{Urz13} for the numerics). The birational morphism $X_t \to X_t^+$ for $t\neq 0$ is just the blow down of one $(-1)$-curve. The inverse of a W-blow-down will be called a {\em W-blow-up}. We notice that over the Wahl singularity of $X^+$ we can have infinitely many W-blow-ups $X$. If the Wahl singularity in $X^+$ is $\frac{1}{\delta^2}(1,\delta a-1)$, then the new Wahl singularities (one or two) appearing in $X^+$ are of the type $\frac{1}{n_i^2}(1,n_i a_i-1)$ where $n_i=\delta n_{i-1}-n_{i-2}$ and $a_i=\delta a_{i-1}-a_{i-2}$ where $(n_0,a_0)=(0,1)$ and $(n_1,a_1)=(\delta,a)$ (see \cite{HTU13} or \cite[\S1]{Urz13}). When two singularities appear, then the numbers for the Wahl singularities are $(n_i,a_i)$ and $(n_{i+1},a_{i+1})$.

\textbf{(W-flip)} In the flipping type, the birational transformation $X \dashrightarrow X^+$ is described in \cite{HTU13} (see also \cite[\S1]{Urz13}). Roughly it is a minimal resolution of $X$ of the singularities contained in $\Gamma$, followed by blow downs and ups in the total transform of $\Gamma$, finishing with a contraction of two, one or none chain of rational curves. The numerical rules are in \cite{HTU13} (see also \cite[\S1]{Urz13}). Of course, the birational map $X_t \to X_t^+$ is an isomorphism for $t \neq 0$.

\begin{remark}
There are constraints for anti-W-flips and W-blow-ups. Anti-W-flips for $(X^+ \subset \X^+) \to (0 \in \D)$, which locally above $0$ is the deformation of an extremal $P$-resolution (see \cite[\S4]{HTU13}), only exist for certain deformations. (For the precise statement see \cite[Cor.3.23]{HTU13}) See Example 3.26 in \cite{HTU13} where the anti-flip is not an anti-W-flip. One can produce a similar example in the case of divisorial contractions, using a $\Z_6$-quotient of a simple elliptic singularity, whose minimal resolution is a union of $4$ smooth rational curves $E_1$, $E_2$, $E_3$, and $F$ so that the $E_i$ are disjoint, each meets the curve $F$ transversally at a single point, and $E_1^2=-2$, $E_2^2=-3$, $E_3^2=-6$, and $F^2=-2$. Similarly to \cite[Example 3.26]{HTU13}, here one attaches a $(-1)$-curve transversal at one point of $E_1$ to define a canonical (nonterminal) extremal neighborhood. The Wahl singularity in $X^+$ is $\frac{1}{9}(1,2)$.
\label{anti}
\end{remark}

When we apply \textbf{(III)} above, we have that either the Picard number of $X$, or the indices of some of its singularities strictly decreases. Thus it ends after finitely many steps with a W-surface as in \textbf{(I)} or \textbf{(II)}, or with $K_X$ nef.

\begin{definition}
Given a W-surface, a {\em minimal model} of it is a minimal W-surface obtained by applying W-blow-downs and W-flips.
\label{min}
\end{definition}

\begin{proposition}
A minimal model is unique.
\label{minmod}
\end{proposition}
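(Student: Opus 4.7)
Let $X_1$ and $X_2$ be two minimal models of the W-surface $X$. The sequences of W-blow-downs and W-flips producing each $X_i$ from $X$ induce a birational map $\phi \colon \X_1 \dashrightarrow \X_2$ of the total 3-folds over $\D$; restricting to $t=0$ will give $X_1 \cong X_2$ once $\phi$ is shown to be an isomorphism.

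First, I would analyze the general fiber. For $t \neq 0$ each W-flip acts as an isomorphism and each W-blow-down contracts an ordinary $(-1)$-curve, so $X_{1,t}$ and $X_{2,t}$ are smooth birational projective surfaces and $\phi_t$ is birational. By the previous lemma $K_{X_{i,t}}$ is nef, and nefness of $K$ on a smooth projective surface precludes the rational and minimal ruled cases (both carry $K$-negative curves), forcing $\kappa(X_{i,t}) \geq 0$. The classical uniqueness of minimal models for smooth projective surfaces with nonnegative Kodaira dimension then gives an isomorphism $\phi_t \colon X_{1,t} \xrightarrow{\sim} X_{2,t}$.

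To extend across the central fiber, observe that $\phi$ is an isomorphism over $\D \setminus \{0\}$, hence its indeterminacy is finite and contained in $X_1$. Choosing a common resolution $p \colon W \to \X_1$, $q \colon W \to \X_2$ and applying the negativity lemma to the $\D$-relatively nef divisors $K_{\X_1}$ and $K_{\X_2}$ (using that both $\X_i$ are $\Q$-factorial terminal) yields $p^{*}K_{\X_1} = q^{*}K_{\X_2}$. Thus $\phi$ is a crepant birational map, and any curve it contracts lies in $X_1$ with $K_{X_1}$-intersection zero.

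The main obstacle is ruling out such $K$-trivial contracted curves, since a priori two minimal models of a $\Q$-factorial terminal 3-fold are only unique up to flops. My plan here is to factor $\phi$ as a sequence of relative extremal contractions of $\X_i \to \D$ and invoke the classification in \cite{HTU13}: every such extremal neighborhood producing a W-surface is of type \eni{} or \enii{}, and in every case the contracted curve $\Gamma$ satisfies $K \cdot \Gamma < 0$. Consequently no $K$-trivial extremal contraction is available within the W-surface setting, $\phi$ contracts nothing, and $\phi$ must be an isomorphism. The delicate point is precisely that this classification is what separates W-surface minimal models (unique on the nose) from general 3-fold minimal models (unique only up to flops).
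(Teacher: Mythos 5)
Your setup is on the right track: reducing to a birational map $\phi \colon \X_1 \dashrightarrow \X_2$ of the total spaces over $\D$ and observing (via a common resolution and the negativity lemma) that $\phi$ is crepant and small is exactly where the paper starts, and you correctly identify the real obstacle, namely that minimal models of $\Q$-factorial terminal $3$-folds are a priori unique only up to flops. (The general-fiber discussion is fine but not needed; also, the indeterminacy locus of a flop is a curve, not a finite set.)

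The final step, however, has a genuine gap. You propose to rule out nontrivial flops by noting that the extremal neighborhoods classified in \cite{HTU13} are all of type \eni{} or \enii{} with $K\cdot\Gamma<0$, and concluding that ``no $K$-trivial extremal contraction is available.'' This does not follow: \cite{HTU13} classifies the $K$-negative contractions used to run the MMP, and says nothing about the existence of $K$-trivial small contractions on a minimal $\X_i$. Indeed such contractions do occur --- the paper's own proof begins by writing $\phi$ as a composition of analytic flops (Koll\'ar, \emph{Flops}, Thm.~4.9), each of which comes from a small contraction $c\colon \X_{1}\to\Y$ over $\D$ with $K\cdot C=0$ on the contracted curves. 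The substantive point is not that these contractions are absent but that the resulting flop is an \emph{isomorphism}: by \cite[Thm.2.2.2]{Ko91} and \cite[Thm.5.3]{KSB88} the point $y$ in the special fiber $Y$ of $\Y$ is a T-singularity, both $X_{1}\to Y$ and its flop restrict to M-resolutions of $y\in Y$, and M-resolutions are unique by \cite{BC94}. That uniqueness is the missing ingredient; it cannot be replaced by an appeal to the $K$-negative classification. The paper's subsequent example (the Enriques degeneration with a $[3,3]$ singularity, where $\Q$-factoriality fails and two genuinely different contractions exist) shows that some such geometric input is unavoidable.
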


\begin{proof}
Assume there are two minimal models $X_{m1}$ and $X_{m2}$. We have a birational map $g \colon \X_{m1} \dashrightarrow \X_{m2}$. Then, since the $\X_{mi}$ are $\Q$-factorial normal analytic $3$-folds with terminal singularities and $K_{\X_{mi}}$ nef, we know that $g$ can be written as composition of analytic flops \cite[Thm.4.9]{flops}. Let $g' \colon \X_{m1} \dashrightarrow \X'$ be the first flop. So, we have a small contraction $c \colon \X_{m1} \to \Y$ over $\D$ such that $K_{X_{m1}} \cdot C =0$ for the curves $C$ contracted by $c$ to $y \in \Y$. By \cite[Thm.2.2.2]{Ko91}, the singularity $y \in \Y$ is terminal, and by \cite[Thm.5.3]{KSB88} we obtain that $y \in Y$ is a T-singularity, where $Y$ is the special fiber of $\Y \to \D$. Locally $X_{m1} \to Y$ gives an M-resolution of $y \in Y$ and they are unique \cite{BC94}. Notice that for the contraction $\X' \to \Y$ we have the same situation, with same singularities. Therefore $g'$ is an isomorphism, and so $g$.
\end{proof}

In the previous lemma, $\Q$-factoriality for $\X$ was crucial. For example, take a rational elliptic surface with a multiplicity two fiber. Assume it has an $I_2$ fiber (nodal rational curve). We blow up its two nodes and contract the two $(-4)$-curves. One can show that there are $\Q$-Gorenstein smoothings and for any we have a minimal W-surface (a general fiber is an Enriques surface, see \cite[Sect.5]{Urz13}). Each of the two images of the $(-1)$-curves from the nodes of $I_2$ gives two different ways to contract, but these two contractions are not related by a flop. The $\Q$-Gorenstein smoothing of the T-singularity $[3,3]$ gives a $3$-fold which is not $\Q$-factorial.

If the minimal model has $K_X^2>0$, then we also have (unique) canonical model. (This is \cite[Lemma3.1]{Urz13}.)

\begin{proposition}
Let $X$ be a minimal W-surface with $K_X^2>0$. Then, its canonical model $(X_{\text{can}} \subset \X_{\text{can}}) \to (0 \in \D)$ has $X_{\text{can}}$ projective surface with only T-singularities. This is, it has Du Val singularities or cyclic quotient singularities $\frac{1}{dn^2}(1,dna-1)$ with gcd$(n,a)=1$. \label{canmod}
\end{proposition}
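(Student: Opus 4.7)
The plan is to construct the canonical model by running the relative MMP on the $3$-fold $\X \to \D$, then to identify the special fiber singularities via \cite[Thm.5.3]{KSB88}.

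First I would argue that $K_{\X}$ is nef relative to $\D$. By the preceding lemma we know $K_{X_0} = K_X$ and $K_{X_t}$ for $t \neq 0$ are all nef, and since the curves contained in fibers generate the relative numerical classes of curves on $\X/\D$, relative nefness follows. Moreover $K_\X$ is $\D$-big because $(K_\X^2 \cdot X_t) = K_{X_t}^2 = K_X^2 > 0$. Together with the fact that $\X$ is $\Q$-factorial with terminal singularities, the relative base-point-free theorem applied to $K_\X$ over $\D$ yields an integer $m>0$ and a projective morphism $c \colon \X \to \X_{\text{can}}$ over $\D$ with connected fibers, such that $mK_\X = c^* L$ for some $\D$-ample $\Q$-Cartier divisor $L$ on $\X_{\text{can}}$.

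Next I would verify that $\X_{\text{can}}$ has canonical singularities and is $\Q$-factorial in the analytic sense we need. Since $c$ is a $K_\X$-trivial contraction from a terminal $\Q$-factorial $3$-fold, $\X_{\text{can}}$ has canonical singularities by the standard argument (pull back $K_{\X_{\text{can}}}$ and compare with $K_\X$). The construction is compatible with the fibration over $\D$, giving a flat family $\X_{\text{can}} \to \D$ whose general fiber is the classical canonical model of $X_t$ (hence a surface of general type with Du Val singularities only), and whose central fiber $X_{\text{can}}$ is obtained from $X$ by contracting all curves $C \subset X$ with $K_X \cdot C = 0$. By Hodge index these curves have $C^2 < 0$, and on a surface with Wahl singularities they are configurations of smooth rational curves; thus $X_{\text{can}}$ is normal.

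Finally I would apply \cite[Thm.5.3]{KSB88} to the family $\X_{\text{can}} \to (0 \in \D)$: this is a $\Q$-Gorenstein smoothing (the general fiber $X_{\text{can},t}$ is the canonical model of the smooth surface $X_t$, so it has only Du Val singularities) of the normal projective surface $X_{\text{can}}$, and $K_{\X_{\text{can}}}$ is $\Q$-Cartier. Hence each singularity of $X_{\text{can}}$ is a T-singularity, which by \cite[Prop.3.10]{KSB88} is either Du Val or of the form $\frac{1}{dn^2}(1,dna-1)$ with $\gcd(n,a)=1$, proving the claim. The main subtlety to check is the existence of the relative contraction; once the base-point-free theorem is granted in the analytic/complex-analytic setting over the germ $\D$ (which holds for $\Q$-factorial terminal $3$-folds with $K$ relatively nef and big), the rest is formal.
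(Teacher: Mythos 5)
Your proposal is correct and follows essentially the same route as the paper: both arguments invoke the relative canonical model $\X \to \X_{\text{can}}$ over $\D$ (you derive its existence from the relative base-point-free theorem, the paper simply cites \cite{KM1998}), control the singularities of the central fiber via the crepant contraction, and then conclude via the classification in \cite[\S 5]{KSB88}. The only quibble is terminological: $\X_{\text{can}} \to \D$ is not literally a \emph{smoothing} (its general fiber has Du Val singularities), but the relevant results of \cite{KSB88} apply to $\Q$-Gorenstein families whose general fiber has canonical singularities, so the conclusion stands.
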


\begin{proof}
We know there is $(X_{\text{can}} \subset \X_{\text{can}}) \to (0 \in
\D)$; cf. \cite{KM1998}. We have a birational morphism $\X \to \X_{\text{can}}$ over $\D$ such that $K_{\X_{\text{can}}}$ is $\Q$-Cartier and ample. Notice that $X_{\text{can}}$ has log terminal singularities because $X$ has \cite[pp.102--103]{KM1998}. The singularities of $X_{\text{can}}$ must be T-singularities by \cite[\S5.2]{KSB88}.
\end{proof}


\section{Birational $\Q$-Gorenstein smoothings} \label{s2}

Let $X$ be a W-surface. Why do these smooth projective surfaces $X_t$ degenerate to $X$? We remark that in this type of degeneration there are no vanishing cycles. One can find an explanation when the general fiber has Kodaira dimension $0$ and $1$ in \cite[\S4]{Kaw92}. When $X$ is indeed singular, this has to do with collisions of special fibers on an elliptic fibration of $X_t$. Also, there is a general discussion for $q=p_g=0$ surfaces in \cite{Hack11e} through certain exceptional vector bundles. Here we describe the situation when a resolution of $X$ and $X_t$ have the same plurigenera. In particular, when they are actually birational. This happens for example with the surfaces in \cite{Man91,HP10}, and also in many examples of stable surfaces in the KSBA compactification of the moduli space of surfaces of general type; see \cite{Urz13}. We will work with the later at the end of this article.

The claim is: any such W-surface comes from a smooth W-surface, after applying W-blow-downs, and W-flips. Thanks to the MMP described in \S \ref{s1} and a result of Kawamata \cite[Lemma2.4]{Kaw92}, the only nontrivial case is degenerations of $\P^2$. We work out that now.

Let $\F_m$ be the Hirzebruch surface with a smooth rational curve $\Gamma_m$ with $\Gamma_m^2=-m$. Let $(0 \in \D)$ be a smooth analytic germ of curve. We consider three situations $m=3,5,7$:

\begin{itemize}
\item[\textbf{(dm)}] A W-surface with $X=\F_m$, $X_t=\F_1$ for $t \neq 0$, and the $(-1)$-curve of $\F_1$ degenerates to $\Gamma_m + \sum_{i=1}^{(m-1)/2} F_i$, where $F_i$ are distinct fibers of $\F_m \to \P^1$.
\end{itemize}

\begin{theorem}
Let $X$ be a W-surface with general fiber $\P^2$. Then, after applying one W-blow-up and finitely many W-flips, this W-surface becomes the W-surface in \textbf{(dm)} for some $m$.
\label{projplane}
\end{theorem}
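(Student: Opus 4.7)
The plan is to combine Case \textbf{(I)} of the MMP from \S\ref{s1} with one W-blow-up, followed by the rest of the MMP restricted to W-flips. Since $X$ has general fiber $\P^2$, Case \textbf{(I)} together with \cite[Cor.1.2]{HP10} forces $X$ to be a $\Q$-Gorenstein deformation of some $\P(a^2,b^2,c^2)$ with $(a,b,c)$ a Markov triple. I would apply one W-blow-up at a Wahl singular point of $X$ (or a classical blow-up at a smooth point in the trivial case $X=\P^2$), yielding a W-surface $X'$ of Picard rank $2$ whose general fiber is the blow-up $\F_1$ of $\P^2$ at a point.

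Next, I would run the MMP of \S\ref{s1} on $X'$ using only W-flips. W-blow-downs are excluded because they would strictly decrease the Picard rank below $2$, undoing the single W-blow-up that was performed. Termination is automatic since each W-flip strictly decreases the sum of indices of Wahl singularities. The terminal W-surface cannot have $K$ nef, since the general fiber $\F_1$ has $K^2=8$ and $K$ not nef, contradicting the minimality lemma above. It cannot be of Case \textbf{(I)} either, because its Picard rank is $2$, not $1$. Thus it lies in Case \textbf{(II)}, and by the analysis of that case it is a smooth deformation of a Hirzebruch surface $\F_m$. Since its general fiber is $\F_1$, we obtain $(\F_m \subset \X'') \to \D$ with $X''_t = \F_1$ for $t \neq 0$.

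To pin down the limit of the $(-1)$-curve, I would extend $\Gamma_1 \subset \F_1$ through the $\Q$-factorial $3$-fold $\X''$ to a section divisor and restrict to the central fiber to obtain an effective curve $C$ on $\F_m$ with $C^2=-1$, $K_{\F_m}\cdot C=-1$, and $p_a(C)=0$. A direct intersection-theoretic computation on $\F_m$ shows that the only such $C$ is $\Gamma_m + \sum_{i=1}^{(m-1)/2}F_i$ for distinct fibers $F_i$, which already forces $m$ to be odd.

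The main obstacle is then the bound $m \in \{3, 5, 7\}$. I expect this by tracking the chosen W-blow-up and the ensuing W-flips against the Markov tree of $(a,b,c)$, using the explicit numerics of the extremal neighborhoods \eni~and \enii~in \cite{HTU13}. The three allowed values $m=3, 5, 7$ should correspond to the three Markov-mutation directions available from $(a, b, c)$, with the explicit W-flip numerics both preventing further reduction and ruling out the occurrence of any other odd $m$.
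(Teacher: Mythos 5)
Your overall architecture---one blow-up to make the general fiber $\F_1$, then a run of W-flips landing on a smooth deformation of some $\F_m$, then an intersection-theoretic identification of the limit of the $(-1)$-curve---has the same shape as the paper's proof, but two essential steps are missing, and they are where the content of the theorem lives. First, you cannot simply decree that the MMP proceeds ``using only W-flips'' because W-blow-downs would lower the Picard rank: at each stage you must \emph{exhibit} a $K$-negative extremal ray whose extremal neighborhood is of flipping rather than divisorial type, since if only divisorial rays were available you would be forced to blow down, return to Picard rank $1$, and start over with a degeneration of $\P^2$. The paper supplies exactly this via the $\maltese$-type bookkeeping and Lemma \ref{keyplanelemma}: it tracks distinguished curves $A,B,C,F$ in the (at most two) singular fibers of a ruling on the minimal resolution, shows $B\cdot K<0$ with $B$ contractible to a cyclic quotient singularity, and argues the resulting extremal neighborhood is flipping because the $(-1)$-curve of the general fiber $\F_1$ degenerates to an effective divisor containing the fiber $F$, so no curve of the general fiber can be contracted. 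The persistence of this structure under each flip is what makes the induction run; nothing in your proposal substitutes for it.

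Second, the bound $m\in\{3,5,7\}$---which you correctly flag as the main obstacle---is left unproved, and the Markov-tree heuristic you sketch is not how the paper obtains it. The input is Manetti's structure theorem \cite[Thm.18]{Man91}: the exceptional locus of the minimal resolution $S\to X$ lies in at most two singular fibers of a ruling $S\to\P^1$ together with one section $\Gamma$, and contracting the fibers gives $S\to\F_m$ with $m\in\{4,7,10\}$ according to the number ($0$, $1$, or $2$) of singular fibers. The first W-blow-up is taken at a point of a \emph{smooth} fiber, not at a Wahl singular point as you propose; this choice matters, since it creates the $(-1)$-curve meeting $\Gamma$ that triggers the first flip and lowers $m$ to $\{3,6,9\}$, after which the flips confined to the at most two singular fibers bring it to $\{3,5,7\}$. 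Without this structural control your terminal $\F_m$ could a priori have any odd $m$. A smaller point: your computation on $\F_m$ only gives the numerical class $\Gamma_m+\tfrac{m-1}{2}F$ and does not by itself force the $\tfrac{m-1}{2}$ fibers in the limit to be distinct; that too comes from the explicit geometry of the flips.
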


The sequence of W-flips is a guided process as we will see in the proof. For the inverse construction, we remark that anti-W-flips are not automatic from the corresponding ``local" extremal P-resolution, they only exist for certain deformations in the deformation space of this P-resolution (see \cite{HTU13}). Moreover, for a given a P-resolution, there are in general infinitely many combinatorial choices for an anti-flip.

First we recall Manetti's \cite[Thm.18]{Man91}. Let $X$ be a W-surface with general fiber $\P^2$, let $\delta \colon S \to X$ be its minimal resolution. Then there is a rational fibration $p \colon S \to \P^1$, such that the exceptional set of $\delta$ is contained in either $0$, or $1$, or $2$ singular fibers union one section $\Gamma$. Moreover, there is a birational morphism $\mu \colon S \to \F_m$ contracting the curves in the before mentioned special $1$ or $2$ fibers such that $\Gamma$ does not intersect the exceptional loci of $\mu$, the image of $\Gamma$ is the negative curve of $\F_m$, and $m$ is either $4$, $7$, or $10$ in correspondence to the number of special fibers $0$, $1$, $2$.

\textbf{First W-blow-up:} Let us consider a section of $\X \to \D$ intersecting $X$ at a point which is contained in a nonsingular fiber of $p \colon S \to \P^1$. Blow this section up, let $X_0$ be this new W-surface whose general fiber is now $\F_1$. If $S_0$ is the blow up at the point of $S$, then we have created a $(-1)$-curve intersecting $\Gamma$ at one point. This is a flipping \eni ~for the Wahl singularity which includes $\Gamma$ in its minimal resolution. We apply the corresponding W-flip. Notice that $\Gamma^2 \leq -4$, and so for the new W-surface $X_1$ we have a minimal resolution $\delta_1 \colon S_1 \to X_1$ with the same properties as for $\delta \colon S \to X$ but now $m$ is either $3$, $6$, or $9$ (in correspondence to the number of special fibers $0$, $1$, $2$).

The birational operations we are going to use are W-flips with flipping curves always in the special (singular) fibers of $p \colon S \to \P^1$. For this we need the following.

A $\maltese$ type of surface will be a W-surface $X_i$ with four distinguished rational smooth curves $A$, $B$, $C$, and $F$ so that the minimal resolution $\delta_i \colon S_i \to X_i$ has a rational fibration $p_i \colon S_i \to \P^1$ with at most two singular fibers (as the one above), where $A$ is a $(-1)$-curve in one singular fiber, $B$ and $C$ are in the other, and $F$ is a general smooth fiber. In addition they must satisfy:

\begin{enumerate}
\item $B \cdot K_{X_i} <0$ and $C \cdot K_{X_i} >0$.
\item We can contract $B$ in $X$ and $C$ in $X$ to cyclic quotient singularities.
\item The special section $\Gamma$ of $S_i$ appears in one of the minimal resolutions of the cyclic quotient singularities in (2) with $\Gamma^2 \leq -3$.
\end{enumerate}

The conditions (1) and (2) ensure the existence of an \eni ~or \enii ~of flipping type for the curve $B$. Notice that, in our case, it is of flipping type since the $(-1)$-curve of the general fiber $\F_1$ degenerates to an effective $\Q$-divisor in $X$ containing $F$. The condition (3) says that we preserve the fibration of $S_i$, and so the strict transform of $F$ after the flip is again an $F$ for $S_{i+1}$. In particular we are flipping curves only in the two special fibers.

Before continuing with the proof, we run the MMP of Theorem \ref{projplane} in an example.

\begin{figure}[htbp]
\includegraphics[width=8cm]{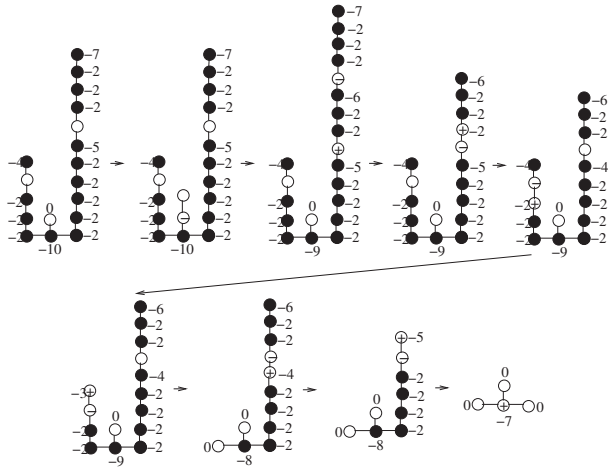}
\caption{Running Theorem \ref{projplane} on an example} \label{f0}
\end{figure}

\begin{example}
We take the example in \cite[p.117]{Man91}. There $X$ has three Wahl singularities $\frac{1}{4}(1,1)$, $\frac{1}{25}(1,4)$, and $\frac{1}{29^2}(1,29 \cdot 21 -1)$. The minimal resolution $\delta \colon S \to X$ gives the fibration $p \colon S \to \P^1$ with the exceptional divisor of $\delta$ contained in two fibers and one section $\Gamma$, where $\Gamma^2=-10$. In the upper-half corner of Figure \ref{f0}, we have the dual graph of the exceptional curves of $\delta$ together with two $(-1)$-curves (one in each of the two fibers), and a fiber. The $\bullet$ represent the curves in the exceptional divisor, and the numbers are the self-intersections of the curves (no number means $-1$). As in \cite{HTU13,Urz13}, the graph-diagram in Figure \ref{f0} shows how we run the MMP in Theorem \ref{projplane}. First, we have the W-blow-up of a section of the W-surface $X$ (i.e. of the $3$-fold) which intersects a point of the distinguished fiber not in $\Gamma$. The first arrow indicates that birational transformation on the surface $S$. After that, we perform seven W-flips. The $K$-negative curves are marked with a $\ominus$, the $K$-positive flipped ones with a $\oplus$. After the first flip, we see the curves $A$, $B$, $C$, $F$ above as some permutation of the elements in $\{ \circ, \circ, \oplus, \ominus \}$.
\label{e1}
\end{example}

\begin{lemma}
Assume that $X_i$ is a W-surface of $\maltese$ type coming from W-flips starting with Manetti's situation above. Then, the W-flip of $B$ in the W-surface $X_i$ produces a W-surface $X_{i+1}$ of $\maltese$ type.
\label{keyplanelemma}
\end{lemma}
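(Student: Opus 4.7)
The plan is to perform the W-flip of $B$ on $\X_i$, obtain $X_{i+1}$, and then exhibit four curves $A', B', C', F'$ on $X_{i+1}$ verifying the $\maltese$ axioms. The key preliminary observation is that a W-flip is local: by the construction in \cite[Thm.5.3]{HTU13}, the birational map $\chi\colon \X_i \dashrightarrow \X_{i+1}$ over $\D$ is an isomorphism outside an analytic neighborhood of $B$ together with the Wahl singularity on it. Consequently the first singular fiber of $p_i$ (the one containing $A$) and the general fiber $F$ of $p_i$ are not disturbed, so I set $A' := A$ and $F' := F$. Since $(F')^2 = 0$ and $F' \cdot K_{X_{i+1}} = -2$ by adjunction, the pencil $|F'|$ defines a rational fibration on $X_{i+1}$ which lifts through the minimal resolution $\delta_{i+1}\colon S_{i+1}\to X_{i+1}$ to a fibration $p_{i+1}\colon S_{i+1}\to \P^1$ with at most two singular fibers: the unchanged $A$-fiber and the $B$-fiber modified by the flip.

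Next I locate $B'$ and $C'$ inside the modified $B$-fiber of $p_{i+1}$. The explicit description of a W-flip recalled in \S\ref{s1} shows that this fiber consists of a (new) Wahl chain flanked by two distinguished smooth rational $(-1)$-curves of $S_{i+1}$: the proper transform $\chi_\ast C$ of $C$ (unchanged, since $C$ sits at the end of the fiber opposite to $B$), and a new $(-1)$-curve $N$ created adjacent to the new Wahl chain on the $B$-side. The $K_{X_{i+1}}$-intersections of these two curves are opposite in sign by a direct computation using the discrepancy data of the W-flip: $\chi_\ast C \cdot K_{X_{i+1}} < 0$ while $N \cdot K_{X_{i+1}} > 0$. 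So I put $B' := \chi_\ast C$ and $C' := N$, which settles condition (1) of the $\maltese$ definition. Condition (2) for $B'$ is inherited from condition (2) for $C$ in $X_i$, since the chain of rational curves along which $C$ contracts lies on the untouched side of the fiber, and condition (2) for $C'$ holds because $C'$ together with the new Wahl chain forms a T-chain that contracts to a cyclic quotient, by the very construction of the M-resolution produced by the flip.

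Condition (3), the persistence of the inequality $\Gamma^2 \le -3$ in the minimal resolution of one of the new cyclic quotient singularities, is the main obstacle. The section $\Gamma$ lies in the exceptional set of $\delta_i$ and meets the $B$-fiber inside the shared T-chain of the extremal P-resolution organizing the W-flip. Its self-intersection in $S_{i+1}$ may differ from that in $S_i$, since the W-flip involves blow-ups and blow-downs in the total transform of the flipping chain that may touch $\Gamma$. I would track $(\Gamma^2)_{S_{i+1}}$ using the recursions $n_j = \delta n_{j-1} - n_{j-2}$ and $a_j = \delta a_{j-1} - a_{j-2}$ for Wahl indices recalled in \S\ref{s1}, together with the continued-fraction description of extremal P-resolutions from \cite[\S4]{HTU13}. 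Because the hypothesis $(\Gamma^2)_{S_i} \le -3$ expresses that $\Gamma$ sits as a component of a T-chain attached to the old Wahl chain, and the flip produces a new extremal P-resolution sharing the same endpoint T-chain as the old one, the bound $(\Gamma^2)_{S_{i+1}} \le -3$ persists. This verifies condition (3) and completes the proof that $X_{i+1}$ is again of $\maltese$ type.
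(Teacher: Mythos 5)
There are two genuine gaps in your proposal, and they sit exactly where the paper's proof does its real work.

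First, your locality assumption --- that the flip of $B$ leaves the $A$-fiber and the curve $A$ untouched, so that you may set $A':=A$ --- is not justified. The Wahl singularity through which $B$ passes has the section $\Gamma$ in its minimal resolution, and $\Gamma$ meets \emph{both} singular fibers of $p_i$; the configuration being modified by the flip is therefore not confined to the $B$-fiber, and the flipped curve $E^+$ can perfectly well emerge in the singular fiber that contained $A$. The paper's proof acknowledges this explicitly (the flipped curve may share a fiber with either $A$ or $C$ in $S_{i+1}$) and handles it by \emph{renaming}: the flipped curve becomes the new $C$, the distinguished curve sharing its fiber becomes the new $B$ (whether that is the old $A$ or the old $C$), and the curve in the remaining fiber becomes the new $A$. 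Your fixed assignment $A'=A$, $B'=\chi_*C$ collapses this case distinction and fails when $E^+$ lands in the $A$-fiber.

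Second, the heart of the lemma is condition (1) for the new configuration, i.e.\ that the new $B$ satisfies $B\cdot K_{X_{i+1}}<0$, and you dispose of it with ``a direct computation using the discrepancy data,'' which is neither carried out nor correct as stated: if $C$ is disjoint from the locus modified by the flip, then $\chi_*C\cdot K_{X_{i+1}}=C\cdot K_{X_i}>0$, so the sign of its $K$-degree does not change. The paper proves (1) by a global numerical argument that is entirely absent from your proposal: $X_{i+1}$ has Picard number $2$ (no vanishing cycles; the general fiber is $\F_1$), so $F\equiv uB+vC$; intersecting with $A$ and then with $B$ (in the two cases $A\cdot C=0$ or $A\cdot B=0$) gives $u,v>0$; and then $F\cdot K_{X_{i+1}}<0$ together with $C\cdot K_{X_{i+1}}>0$ forces $B\cdot K_{X_{i+1}}<0$. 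Without this, or a genuine substitute, your verification of (1) is a gap. As a lesser point, you invert the difficulty of the lemma: condition (3) is regarded in the paper as immediate from Manetti's structure theorem and the description of a W-flip (a contraction of at most two Wahl chains inside the blown-up singular fibers, so the fibration and the section $\Gamma$ with $\Gamma^2\le-3$ persist), whereas your lengthy discussion of (3) via the index recursions is vaguer than the paper's one-line observation while the genuinely delicate point goes unproved.
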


\begin{proof}
In $X_{i+1}$, let $A$ be the strict transform of $A$, same for $C$, and let $B$ be the flipped $B$. So we know that $B \cdot K_{X_{i+1}} >0$. We notice that (3) is clear from the Manetti's assumptions on $S_i \to \P^1$ and the way a W-flip is performed on $X_i$ (it is the contraction of at most two Wahl chains of a blow-up on consecutive nodes from the minimal resolution of the corresponding cyclic quotient singularity). Now $B$ could be in the same singular fiber (in $S_{i+1}$) than either $A$ or $C$. We rename curves so that the flipped curve $B$ is $C$, the curve in the same singular fiber than the flipped curve is $B$, and the curve in the other singular fiber is $A$. In this way, $A$ has to be a $(-1)$-curve in the minimal resolution $S_{i+1}$.

To prove (1), we need to show $B \cdot K_{X_{i+1}} < 0$. For this we use that $X_{i+1}$ has Picard number $2$. Notice that because of (3), $F \cdot K_{X_{i+1}} <0$, and so $F \equiv uB + vC$ with $u,v \in \Q$. We have two cases: $A \cdot B=0$ or $A \cdot C=0$. Say $A \cdot C=0$, then $A \cdot B>0$ and so $F \cdot A = u B \cdot A$ implies $u>0$. Now $u B^2 + v C \cdot B  = F \cdot B$ implies so $v>0$. Therefore $B \cdot K_{X_{i+1}}<0$. For the other case $A \cdot B=0$, we do similar intersections to obtain the same result, so (1) is true.
\end{proof}

\begin{proof}[Proof of Theorem \ref{projplane}]
Notice that the $X_1$ above (obtained from \cite[Thm.18]{Man91} applying one W-blow-up and one W-flip) is of $\maltese$ type. Then we apply the W-flips for the W-surfaces $X_i$ of $\maltese$ type applying previous lemma. They have to terminate on a nonsingular $F_n$ W-surface for some $n$. Notice that $n=3,5,7$ according to the number of special fibers. Also, the $(-1)$-curve of the general fiber $\F_1$ degenerates to the claimed curves in $\F_n$.
\end{proof}
\bigskip

Let us denote by $P_m(Z):=\text{dim}_{\C} H^0(Z,m K_Z)$ the plurigenera of a nonsingular projective surface $Z$.

\begin{theorem}
Let $X$ be a W-surface, and let $\widetilde{X}$ be a resolution of the singularities of $X$. Assume that $P_m(\widetilde{X})=P_m(X_t)$ for $t \neq 0$ and $m \gg 0$. Then this W-surface can be constructed from a smooth W-surface applying a finite number of W-blow-downs (or -ups) and anti-W-flips.
\label{birWsurf}
\end{theorem}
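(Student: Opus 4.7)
The plan is to run the W-MMP established in Section \ref{s1}, verify that the plurigenera hypothesis is preserved along the way, and then handle the three terminal cases of the MMP separately: the ruled case is already smooth, the $\P^2$ case is handled by Theorem \ref{projplane}, and the $K$-nef case is handled by Kawamata's \cite[Lemma 2.4]{Kaw92}. Reversing the MMP sequence then exhibits the original $X$ as built from a smooth W-surface by anti-W-flips and W-blow-ups (or -downs).

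First, starting with $(X \subset \X) \to (0 \in \D)$, I would apply the W-MMP of Section \ref{s1}, producing a sequence of W-surfaces $X=X^{(0)}, X^{(1)}, \ldots, X^{(N)}$ related by W-blow-downs and W-flips, with $X^{(N)}$ falling in case (I), (II), or (III). The plurigenera hypothesis propagates at each step: on the general fiber, a W-flip is an isomorphism and a W-blow-down restricts to the contraction of a single $(-1)$-curve, so $P_m(X^{(i)}_t)$ is constant in $i$. On the special fiber, each W-blow-down and W-flip lifts to a birational modification between the minimal resolutions (using the explicit numerics of \eni\ and \enii\ from \cite{HTU13}, which on the resolution amount to blow-ups of smooth points, smooth blow-downs, and isomorphisms in codimension one), so $P_m(\widetilde{X^{(i)}})$ is also constant in $i$. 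Hence the hypothesis $P_m(\widetilde{X^{(i)}}) = P_m(X^{(i)}_t)$ for $m \gg 0$ is inherited by $X^{(N)}$.

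Next, I would dispose of the three terminal cases. In case (II), by the analysis of Section \ref{s1}, $X^{(N)}$ is already a smooth deformation of a geometrically ruled surface, so $X^{(N)}$ is a smooth W-surface and we are done. In case (I), the general fiber is $\P^2$ and Theorem \ref{projplane} directly furnishes a construction of $X^{(N)}$ from a smooth Hirzebruch W-surface $\F_m$ ($m\in\{3,5,7\}$) by one W-blow-up followed by finitely many W-flips; inverting this presents $X^{(N)}$ as obtained from a smooth W-surface by W-blow-downs and anti-W-flips. In case (III), $X^{(N)}=X_{\min}$ is a minimal W-surface with $K_{X_{\min}}$ nef, and the preserved hypothesis reads $P_m(\widetilde{X_{\min}}) = P_m(X_{\min,t})$ for $m \gg 0$. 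This is exactly the setup of Kawamata's \cite[Lemma 2.4]{Kaw92} for moderate degenerations, which forces $X_{\min}$ to be smooth.

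In all three cases $X^{(N)}$ is a smooth W-surface, so reversing the MMP sequence (W-blow-downs become W-blow-ups, W-flips become anti-W-flips) exhibits $X$ as constructed from a smooth W-surface by a finite number of W-blow-ups (or -downs) and anti-W-flips, as required. The main obstacle is the bookkeeping in case (III): one must check carefully that the birational modification of $\widetilde{X^{(i)}}$ induced by each W-blow-down or W-flip really preserves plurigenera, so that the equality $P_m(\widetilde{X_{\min}}) = P_m(X_{\min,t})$ is genuinely available when invoking \cite[Lemma 2.4]{Kaw92}; this amounts to a direct inspection of the numerical descriptions of mk1A and mk2A neighborhoods in \cite{HTU13}, which behave on the minimal resolutions as compositions of smooth blow-ups and blow-downs plus isomorphisms in codimension one.
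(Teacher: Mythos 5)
Your proposal is correct and follows essentially the same route as the paper: run the W-MMP, dispose of cases (I) and (II) via Theorem \ref{projplane} and the ruled-surface analysis, and in the $K$-nef case combine the birational invariance of plurigenera (which is all the paper invokes for the propagation step you elaborate on) with Kawamata's \cite[Lemma 2.4]{Kaw92} to force the minimal model to be smooth. The only cosmetic difference is that the paper compares $P_m(\widetilde{X})$ with $P_m(\widetilde{X'})$ in one step at the end rather than tracking the equality through each intermediate $X^{(i)}$.
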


\begin{proof}
We apply the MMP of \S \ref{s1} to this W-surface. If we arrive to (II), then we are done. If we arrive to (I), then we are done by Theorem \ref{projplane}. Otherwise, after finitely many W-flips or W-blow-downs, we obtain the W-minimal model $X'$ of $X$. If the W-minimal model is smooth, then we are done. If not, by \cite[Lemma2.4]{Kaw92}, there are positive integers $m_1$ and $m_2$ such that $$P_m(X_t) > P_m(\widetilde{X'})$$ for $t \neq 0$ holds for positive integers $m$ with $m_1$ dividing $m$ and $m_2 <m$, where $\widetilde{X'}$ is a resolution of $X'$. But $P_m(\widetilde{X})=P_m(\widetilde{X'})$ and $P_m(X_t)=P_m(X'_t)$ for $t \neq 0$ and all $m$, because plurigenera is a birational invariant between nonsingular varieties. By assumption $P_m(\widetilde{X})=P_m(X_t)$ for $t \neq 0$ and $m\gg 0$, and so we have a contradiction. Therefore $X'$ must be nonsingular.
\end{proof}

\begin{corollary}
Let $X$ be a W-surface such that $X$ is birational to $X_t$, $t \neq 0$. Then this W-surface can be constructed from a smooth W-surface applying a finite number of W-blow-downs (or -ups) and anti-W-flips.
\label{birWsurfA}
\end{corollary}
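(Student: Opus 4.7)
The plan is to deduce this corollary as an immediate specialization of Theorem \ref{birWsurf}. The only task is to verify that the plurigenera hypothesis $P_m(\widetilde{X})=P_m(X_t)$ for $t\neq 0$ and $m\gg 0$ holds whenever $X$ is birational to the general fiber $X_t$.

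First I would recall that plurigenera of smooth projective surfaces are birational invariants: if $V$ and $W$ are smooth projective surfaces with $V \dashrightarrow W$ birational, then $P_m(V)=P_m(W)$ for every $m\geq 0$. Since $\widetilde{X}$ is a smooth projective surface obtained from the singular W-surface $X$ by a resolution of its Wahl singularities, and the given birational map $X \dashrightarrow X_t$ lifts to a birational map $\widetilde{X} \dashrightarrow X_t$ between smooth projective surfaces, we get $P_m(\widetilde{X})=P_m(X_t)$ for all $m$. In particular this holds for $m\gg 0$.

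Now I would simply invoke Theorem \ref{birWsurf}: its conclusion is precisely that the W-surface in question is obtained from a smooth W-surface by a finite sequence of W-blow-downs (or -ups) and anti-W-flips, which is what the corollary asserts.

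The only subtle point, and the only one worth checking carefully, is that a resolution of $X$ actually exists as a smooth projective surface and not merely analytic: but since $X$ has quotient (in fact Wahl) singularities, the minimal resolution $\widetilde{X}\to X$ is a projective birational morphism, so $\widetilde{X}$ is indeed a smooth projective surface, and the plurigenera comparison above is valid. With that noted, there is no remaining obstacle, and the corollary follows directly from Theorem \ref{birWsurf}.
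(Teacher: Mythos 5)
Your proposal is correct and is precisely the argument the paper intends: the corollary is stated with only a \qed because it follows immediately from Theorem \ref{birWsurf} once one notes that a birational map $X \dashrightarrow X_t$ lifts through the resolution $\widetilde{X} \to X$ to give $P_m(\widetilde{X}) = P_m(X_t)$ by birational invariance of plurigenera for smooth projective surfaces. Your careful check that $\widetilde{X}$ is a smooth \emph{projective} surface is a reasonable point to make explicit, and nothing further is needed.
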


\qed

The difficulty of the $\Q$-Gorenstein smoothings in Theorem \ref{birWsurf} relies mainly on W-flips and W-blow-downs. As we have noticed before, there are choices when one performs inverses of them. Also, their description uses an intricate but explicit combinatoric through continued fractions (cf. \cite{HTU13}). On the other hand, the degenerations in Theorem \ref{birWsurf} produce interesting degenerations of curves in the general fiber $X_t$. This is the topic of the next two sections.

\section{Degeneration of curves} \label{s3}

Here is the situation. Let $X$ be a W-surface. Let $\Gamma_t$ be an irreducible curve in the general fiber $X_t$. This curve deforms to a divisor $\Gamma_0$ in $X$. If $X'$ a W-minimal model of $X$, then we want to know explicitly about the degeneration of $\Gamma'_t$ into $\Gamma'_0$ (the total transforms of $\Gamma_t$ and $\Gamma_0$ respectively). Moreover, if $X$ and $X_t$ are birational, we can get a nonsingular $X'$ (Corollary \ref{birWsurfA}) and typically we would have $X'$ isomorphic to $X'_t$. This suggests we can think about this deformation as an explicit degeneration of $\Gamma'_t$ in $X'$. At the end of this section we will show examples using curves in $\P^2$. In the next section we will see how these sort of examples show up in the Koll\'ar--Shepherd-Barron--Alexeev boundary of the moduli space of surfaces of general type.

The set-up: Let $X$ be a W-surface, and let $\Gamma_t$ be an irreducible curve in $X_t$, for $t \neq 0$, degenerating to a divisor $\Gamma_0$ in $X$. This is, we have a divisor $\Gamma$ in $\X$ such that $\Gamma|_{X_t} = \Gamma_t$. We write $\Gamma_0 = \sum_{i=0}^s \alpha_i D_i$ where $D_i$ are distinct irreducible curves and $\alpha_i \in \Z_{\geq 0}$.

Let $(E^- \subset X \subset \X) \to (Q \in Y \subset \Y)$ be a \eni ~or \enii. Assume $D_0=E^-$. If it is of divisorial type, then the W-blow-down produces a W-surface $X'$ with a divisor $\Gamma'$ image of $\Gamma$, such that $\Gamma'_t$ is an irreducible curve for $t \neq 0$ (or a point if $\Gamma$ is the exceptional divisor), and $\Gamma'_0= \sum_{i=1}^s \alpha_i D'_i$ where $D'_i$ is the image of $D_i$.

If it is of flipping type, then the W-flip produces a W-surface $X'$ with a divisor $\Gamma'$ proper transform of $\Gamma$, such that $\Gamma'_t$ is isomorphic to $\Gamma_t$ for $t \neq 0$, and $\Gamma'_0= \sum_{i=1}^s \alpha_i D'_i + \beta E^+$ where $D'_i$ is the proper transform of $D_i$, $E^+$ is the flipping curve, and $\beta \in \Z_{\geq 0}$. Since $\X'$ is $\Q$-factorial and this is a flip, we have the numerical equivalence $$\sum_{i=1}^s \alpha_i (D'_i \cdot K_{X'}) + \beta (E^+ \cdot K_{X'}) = \Gamma'_0 \cdot K_{X'} = \Gamma'_t \cdot K_{X'_t} = \Gamma_t \cdot K_{X_t} $$ from where one calculates $\beta$. We notice that $\beta=0$ if and only if $\Gamma_0 \cap E^- = \emptyset$. The following is a frequent case.

\begin{proposition}
Assume that in the situation above we have an \eni ~of flipping type so that $E^-$ is intersecting at one point one of the ends of the minimal resolution of the corresponding Wahl singularity. Let $\Gamma_0=\sum_{i=1}^s \alpha_i D_i$ with $D_i \neq E^-$ for all $i$, $D_1 \cdot E^- =1$, and $D_i \cdot E^- =0$ for $i \geq 2$ (see Figure \ref{f1}). Then $\beta=\alpha_1$.
\label{freqFlip}
\end{proposition}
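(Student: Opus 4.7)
The plan is to deduce the value of $\beta$ directly from the numerical identity displayed just before the Proposition, namely
\[\sum_{i=1}^{s} \alpha_i (D'_i \cdot K_{X'}) + \beta (E^+ \cdot K_{X'}) \;=\; \Gamma_t \cdot K_{X_t}.\]
The first step is to compute the right-hand side on the special fiber. Since $\X$ is $\Q$-factorial and $X\subset\X$ is Cartier (as the fiber of $\X\to\D$ over a smooth base), the intersection $\Gamma_t\cdot K_{X_t}$ is independent of $t$, so it equals $\Gamma_0\cdot K_X=\sum_{i=1}^{s}\alpha_i(D_i\cdot K_X)$. Substituting yields
\[\sum_{i=1}^{s}\alpha_i(D_i\cdot K_X)\;=\;\sum_{i=1}^{s}\alpha_i(D'_i\cdot K_{X'})+\beta(E^+\cdot K_{X'}). \tag{$\ast$}\]

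The second step is to localize $(\ast)$. By the description of the W-flip in \S\ref{s1}, the map $X\dashrightarrow X'$ is an isomorphism outside a neighborhood of $E^-$ in $X$ (and of $E^+$ in $X'$), and carries $K_X$ to $K_{X'}$ on the isomorphism locus. Since $D_i\cap E^-=\emptyset$ for each $i\geq 2$, the curve $D_i$ lies in this locus and hence $D_i\cdot K_X=D'_i\cdot K_{X'}$. Equation $(\ast)$ therefore collapses to
\[\alpha_1\bigl(D_1\cdot K_X-D'_1\cdot K_{X'}\bigr)\;=\;\beta\,(E^+\cdot K_{X'}),\]
so the Proposition is equivalent to the purely local identity
\[D_1\cdot K_X\;=\;(D'_1+E^+)\cdot K_{X'}. \tag{$\dagger$}\]

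To verify $(\dagger)$ I pass to the minimal resolutions $\widetilde{X}\to X$ and $\widetilde{X'}\to X'$ and invoke the explicit combinatorial description of \eni-flips in \cite[\S 3]{HTU13} (cf.\ \cite[\S 1]{Urz13}). Under the end-meeting hypothesis, $\widetilde{E^-}$ meets the Wahl chain $E_1,\dots,E_r$ of the singularity transversally at a single point lying on one of the extremal curves, say $E_1$. Because $D_1\cdot E^-=1$, the proper transform $\widetilde{D_1}$ meets $\widetilde{E^-}$ at a single smooth point disjoint from the Wahl chain. The W-flip is realized by a controlled sequence of blow-ups at consecutive nodes of the chain followed by contractions of Wahl subchains, all supported in a neighborhood of $\widetilde{E^-}\cup E_1\cup\cdots\cup E_r$; since $\widetilde{D_1}$ is transverse to this configuration at exactly one prescribed point, it traverses the sequence unchanged except for acquiring a single transverse crossing with the extremal curve of the Wahl chain that resolves $E^+$. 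Bookkeeping the $K$-intersection through each elementary blow-up or blow-down (each step contributes an explicit discrepancy correction) accumulates precisely $E^+\cdot K_{X'}$ on the right-hand side of $(\dagger)$.

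The main technical obstacle is this last combinatorial bookkeeping, which uses the continued-fraction data of the Wahl chain. The end-meeting hypothesis is essential: it guarantees that $\widetilde{D_1}$ interacts with the flipping configuration at only one boundary of the chain, localizing the computation to one side. If $\widetilde{E^-}$ met the interior of the chain instead, $\widetilde{D_1}$ would pick up multiple crossings during the flip and $\beta$ would receive a genuine combinatorial contribution rather than simply reducing to $\alpha_1$.
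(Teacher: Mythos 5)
Your reduction is sound and coincides with the paper's: using $\Gamma_t\cdot K_{X_t}=\Gamma_0\cdot K_X$ and the fact that the $D_i$ with $i\ge 2$ are disjoint from $E^-$ (hence carried isomorphically, with $D_i\cdot K_X=D'_i\cdot K_{X'}$), the claim becomes the local identity $D_1\cdot K_X=(D'_1+E^+)\cdot K_{X'}$, i.e.\ $\beta=\alpha_1\,(D_1\cdot K_X-D'_1\cdot K_{X'})/(E^+\cdot K_{X'})=\alpha_1$. The gap is that you never verify this identity: the sentence ``bookkeeping \dots accumulates precisely $E^+\cdot K_{X'}$'' asserts the conclusion rather than derives it, and you yourself flag the bookkeeping as the remaining obstacle. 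That bookkeeping \emph{is} the proposition.

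Concretely, write the Wahl chain as $E_1,\dots,E_m$ with $\widetilde{E^-}$ meeting $E_1$, and let $E_1,\dots,E_r$ be the initial string of $(-2)$-curves. Contracting $E^-,E_1,\dots,E_r$ in turn costs $D_1$ exactly $r+1$ in $K$-degree on the smooth model, the flipped curve is $E^+=E_m$ with $K\cdot E_m=r$ there, and the residual chain $E_{r+1},\dots,E_{m-1}$ (continued fraction $[-E_{r+1}^2-1,-E_{r+2}^2,\dots,-E_{m-1}^2]$) is contracted to a point of $X'$ lying on both $D'_1$ and $E^+$. With $a,b\in(0,1)$ the discrepancy corrections at the two ends $E_{r+1}$ and $E_{m-1}$, one finds $K_{X'}\cdot D'_1=K_X\cdot D_1-(r+1)+a$ and $K_{X'}\cdot E^+=r+b$, so $\beta=\alpha_1(r+1-a)/(r+b)$ and the conclusion is \emph{equivalent} to the relation $a+b=1$ (with the smooth-chain analogue $K_{X'}\cdot E^+=r+1$ when $r=m-1$). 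That relation is a special feature of the chains produced by these \eni\ flips, not of arbitrary cyclic quotient chains, and nothing in your argument supplies it; without it there is no reason the discrepancy corrections should sum to exactly $E^+\cdot K_{X'}$. Your closing remark about the end-meeting hypothesis is the right intuition, but to complete the proof you must exhibit $a+b=1$ (or an equivalent computation), as the paper does.
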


\begin{figure}[htbp]
\includegraphics[width=8cm]{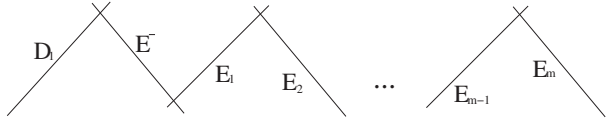}
\caption{Frequent case}  \label{f1}
\end{figure}

\begin{proof}
Let $\frac{1}{n^2}(1,na-1)$ be the Wahl singularity, and let $E_1, E_2, \ldots, E_m$ be the chain of exceptional curves in the minimal resolution as shown in Figure \ref{f1}. Thus $E_1 \cdot E^-=1$, where as always $E^-$ is the strict transform of $E^-$, so it is a $(-1)$-curve. Assume that $E_1, \ldots, E_r$ are all $(-2)$-curves (this set may be empty). Then either $E_m^2=-(r+2)$ if $r \neq m-1$, or $E_m^2=-(r+4)$ if $r =m-1$. The W-flip in this case gives $E^+=E_m$ with either one Wahl singularity on it if $r \neq m-1$ having the Hirzebruch-Jung continued fraction $[-E_{r+1}^2-1,-E_{r+2}^2,\ldots,-E_{m-1}^2]$, or none \cite[Prop.2.8]{Urz13}. Let $a$ and $b$ the discrepancies ($0<a,b<1$) of $E_{r+1}$ and $E_{m-1}$ for the singularity of $X'$ in $E^+$. Then $K_{X'} \cdot D'_1= K_X \cdot D_1 -(r+1)+a$ and $K_{X'} \cdot E^+= r + b$ if $X'$ singular, and so in this case $$\beta =  \frac{\alpha_1(r+1-a)}{r+b}=\alpha_1$$ because $a+b=1$. When $X'$ is smooth at $E^+$, $K_{X'} \cdot D'_1= K_X \cdot D_1 -(r+1)$ and $K_{X'} \cdot E^+= r + 1$, so $\beta =\alpha_1$.
\end{proof}

We will finish this section with two examples derived from vanishing of certain cohomology group.

\begin{proposition}
Let $Z$ be a nonsingular projective surface. Let $\W_1$,..., $\W_r$ be $r$ chains of $\P^1$'s in $Z$ so that $\W_i$ is the exceptional divisor of a Wahl singularity, and $\W_i \cap \W_j= \emptyset$ for $i\neq j$. Assume there is a curve $\Gamma_0=\P^1$ in $Z$ such that $\Gamma_0 \cap \W_i = \emptyset$ for all $i$, and $$H^2\big(Z,T_Z(-\log(\W_1+\ldots+\W_r+\Gamma_0))\big)=0.$$ Then there is a W-surface $X$ such that $X$ is the contraction of all $\W_i$'s, and there is a divisor $\Gamma \subset \X$ with $\Gamma|_{X_t}=\P^1$ and $\Gamma|_X=\Gamma_0$.

\label{obstr1}
\end{proposition}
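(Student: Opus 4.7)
The plan is to reduce to the Wahl--Manetti--Lee-Park construction of $\Q$-Gorenstein smoothings of singular projective surfaces with Wahl singularities, adapted to carry along the additional curve $\Gamma_0$. First, since the $\W_i$ are pairwise disjoint contractible chains of $\P^1$'s representing Wahl singularities, we may simultaneously contract them on $Z$ to obtain a normal projective surface $X$ with exactly $r$ Wahl singular points $p_1,\ldots,p_r$. Because $\Gamma_0$ is disjoint from every $\W_i$, it descends isomorphically to a smooth rational curve $\Gamma_0 \subset X$ sitting in the nonsingular locus, and by abuse of notation we still call it $\Gamma_0$.

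The key step is to propagate the hypothesized vanishing $H^2(Z,T_Z(-\log D))=0$, with $D=\W_1+\ldots+\W_r+\Gamma_0$, to the singular surface $X$ in a form that controls $\Q$-Gorenstein deformations of the pair $(X,\Gamma_0)$. Let $\pi\colon Z\to X$ be the contraction. Since each $p_i$ is rational, $\pi_* T_Z(-\log D)$ agrees with the reflexive log tangent sheaf of $(X,\Gamma_0)$, while the higher direct images $R^q\pi_* T_Z(-\log(\W_1+\ldots+\W_r))$ are skyscrapers at the $p_i$ whose stalks encode the $\Q$-Gorenstein deformation space of the respective germs; for a Wahl singularity this space is a one-parameter smooth family (the $\Q$-Gorenstein smoothing direction). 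Feeding this into the Leray spectral sequence together with the given $H^2=0$, one concludes that $\Defq(X,\Gamma_0)$ is unobstructed and that the natural restriction map from global $\Q$-Gorenstein deformations to the product of local ones is surjective on tangent spaces.

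With unobstructedness in hand, we choose the smoothing direction simultaneously at all $p_i$ and extend it to a formal (then by Artin approximation, analytic) $1$-parameter $\Q$-Gorenstein deformation $(X\subset \X)\to (0\in\D)$ of the pair $(X,\Gamma_0)$. By construction $X_0=X$, $K_\X$ is $\Q$-Cartier, and $X_t$ is nonsingular for $t\neq 0$ since each Wahl singularity is being smoothed. The curve $\Gamma_0$ extends to a flat divisor $\Gamma\subset\X$ with $\Gamma|_X=\Gamma_0$. Since $\Gamma_0\cong\P^1$ sits in the smooth locus of $\X$ and the arithmetic genus is constant in a flat family of Cartier divisors, the general fiber $\Gamma|_{X_t}$ is again a smooth $\P^1$. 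This produces the required W-surface together with the curve.

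The main obstacle is the clean identification of $\pi_* T_Z(-\log D)$ and of the higher direct images above with the $\Q$-Gorenstein log tangent complex of $(X,\Gamma_0)$, so that the hypothesized vanishing on $Z$ literally translates into unobstructedness of $\Defq(X,\Gamma_0)$ and into surjectivity of the local-to-global map. This identification is essentially Wahl's original computation for Wahl singularities \cite[(6.4)]{Wahl81}, as reformulated in \cite[\S1]{Man91} and \cite[\S2]{LP07}, and it is carried out in the required generality in \cite[\S3]{Hack11}; once it is in place the rest of the argument is the standard smoothing recipe.
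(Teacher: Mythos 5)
Your argument is essentially correct, but it follows a genuinely different route from the paper. You set up the $\Q$-Gorenstein deformation theory of the \emph{pair} $(X,\Gamma_0)$, pushing the hypothesis forward to $H^2(X,T_X(-\log\Gamma_0))=0$ and deducing unobstructedness plus surjectivity onto the local deformation spaces, so that $\Gamma_0$ extends to a flat divisor by fiat. The paper deliberately avoids the deformation theory of pairs: it first blows up points of $\Gamma_0$ until $\Gamma_0^2\leq -2$ (the vanishing survives by the adding--deleting $(-1)$-curves procedure), then contracts $\Gamma_0$ itself, together with the $\W_i$, to cyclic quotient singularities, and invokes only the Lee--Park statement that $H^2(Z',T_{Z'}(-\log(\cdots)))=0$ kills local-to-global obstructions for the resulting singular surface $X'$. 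One then \emph{chooses} the trivial local deformation at the $\Gamma_0$-singularity and the $\Q$-Gorenstein smoothing at the Wahl points; simultaneous resolution of the trivially deformed singularity produces the divisor $\Gamma''$ as literally $\Gamma_0\times\D$, after which the auxiliary $(-1)$-curves are contracted. What the paper's detour buys is that every input is the standard deformation theory of surfaces with quotient singularities (no marked divisor), and the fibers of $\Gamma$ are visibly $\P^1$ from the product structure. What your route buys is directness, at the cost of two soft spots you should shore up: (i) the references you cite ([Wahl81], [Man91], [LP07], [Hack11, \S3]) treat $\Q$-Gorenstein deformations of the surface alone, not of the pair $(X,\Gamma_0)$, so the identification of $\pi_*T_Z(-\log D)$ with the controlling sheaf of $\Defq(X,\Gamma_0)$ and the degeneration of the relevant local-to-global sequence are the actual content of the step and need to be written out (they do work, since $\Gamma_0$ lies in the smooth locus and $R^1\pi_*T_Z(-\log\W_i)=0$ at quotient singularities); (ii) constancy of arithmetic genus alone does not give $\Gamma|_{X_t}\cong\P^1$ --- you also need connectedness (upper semicontinuity of $h^0(\O_{\Gamma_t})$) and irreducibility/reducedness of the general fiber (which follow because a reducible or nonreduced general fiber would force the reduced irreducible special fiber $\Gamma_0$ to decompose as a nontrivial sum of limit divisors).
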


\begin{proof}
We first blow-up distinct points on $\Gamma_0$ so that $\Gamma_0^2 \leq -2$, if necessary. By the adding-deleting $(-1)$-curves procedure (for instance, see \cite[\S4]{PSU13}), we have again $H^2(Z',T_{Z'}(-\log(\W_1+\ldots+\W_r+\Gamma_0)))=0$ where $Z'$ is the blown-up surface, and $\Gamma_0$ is the proper transform of $\Gamma_0$. The configurations $\W_1$,..., $\W_r$, $\Gamma_0$ correspond to exceptional divisors of cyclic quotient singularities, and they can be contracted to a projective surface $X'$. By \cite[pp.487--488]{LP07}, we have that $H^2(Z',T_{Z'}(-\log(\W_1+\ldots+\W_r+\Gamma_0)))=0$ implies $H^2(X',T_{X'})=0$, and that implies no-local-to-global obstructions to deform $X'$. In particular, we can glue local $\Q$-Gorenstein smoothings for the Wahl singularities $\W_i$, and keep the singularity corresponding to $\Gamma_0$. Let $\X' \to \D$ be the corresponding deformation. Since the deformation for the singularity given by $\Gamma_0$ is trivial, we can resolve it simultaneously, obtaining $\X'' \to \D$ $\Q$-Gorenstein smoothing of $X''$ (which is $X'$ with the singularity associated to $\Gamma_0$ minimally resolved). The $3$-fold $\X''$ has a divisor $\Gamma''$ defined by the curve $\Gamma_0$. The $(-1)$-curves in $X'$ corresponding to the initial blow-up of $W$ intersect $\Gamma_0$ at one point. All of them are now contracted to finally obtain the W-surface $X$ with the claimed properties.
\end{proof}

\begin{proposition}
Let $\sum_{i=1}^n C_i$ be a SNC divisor (only nodes as singularities, $C_i$ nonsingular curves) in a nonsingular projective surface $W$ with $H^0(W,\Omega_W^1)=0$. Assume that there is $1<m\leq n$ such that $C_1+\ldots+C_m \sim -K_W$, and the curves $\{C_{m+1}, \ldots, C_n\}$ are numerically independent. ($m=n$ means no second requirement.) Then $$H^2\big(W,T_W(-\log(C_1+\ldots+C_n))\big)=0.$$

\label{obstr2}
\end{proposition}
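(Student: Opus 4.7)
The plan is to use Serre duality to reformulate the claim as a vanishing of global logarithmic forms, and then to exploit the anti-canonical relation $A := C_1+\cdots+C_m \sim -K_W$ to force all residues of such a form to vanish. Write $D = A + B$ with $B = C_{m+1}+\cdots+C_n$. Serre duality on $W$ gives
\[
H^2(W, T_W(-\log D))^{*} \;\cong\; H^0(W, \Omega^1_W(\log D) \otimes K_W),
\]
so it is enough to show the right-hand side is zero.

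The first step is an auxiliary vanishing $H^0(W, \Omega^1_W(\log A) \otimes K_W) = 0$. The hypothesis $H^0(W, \Omega^1_W) = 0$ combined with Serre duality gives $H^2(W, \Omega^1_W) = 0$; feeding this into the residue sequence $0 \to \Omega^1_W \to \Omega^1_W(\log A) \to \bigoplus_{i \leq m} \O_{C_i} \to 0$ and using $H^2(\O_{C_i}) = 0$ yields $H^2(W, \Omega^1_W(\log A)) = 0$. Because $\det \Omega^1_W(\log A) = \O_W(K_W+A) \cong \O_W$, the rank-two bundle $\Omega^1_W(\log A)$ is self-dual, so $T_W(-\log A) \cong \Omega^1_W(\log A)$ and a second application of Serre duality on $W$ gives $H^0(\Omega^1_W(\log A) \otimes K_W) \cong H^2(\Omega^1_W(\log A))^{*} = 0$.

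Next, tensoring $0 \to \Omega^1_W(\log A) \to \Omega^1_W(\log D) \to \bigoplus_{i > m} \O_{C_i} \to 0$ with $K_W$ and taking cohomology, the previous vanishing shows that any $\omega \in H^0(\Omega^1_W(\log D) \otimes K_W)$ is detected by its residues $r_i \in H^0(C_i, K_W|_{C_i})$ for $i > m$. For $i > m$ with $C_i \cdot A > 0$, $K_W|_{C_i}$ has negative degree and $r_i = 0$ automatically; in the remaining cases $C_i$ is disjoint from $A$, $K_W|_{C_i} \cong \O_{C_i}$, and $r_i$ is a constant. The crucial step is to force these remaining constants to zero using numerical independence. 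Let $s_A \in H^0(W, \O_W(-K_W))$ be the section cutting out $A$; then $s_A\omega \in H^0(\Omega^1_W(\log D))$, and its residue along $C_i$ is $(s_A|_{C_i})\,r_i$. This vanishes for $i \leq m$ since $s_A|_{C_i} \equiv 0$, and for $i > m$ with $C_i \cap A \neq \emptyset$ since $r_i = 0$; on the remaining components it equals $c_i r_i$ with $c_i = s_A|_{C_i} \in \C^{*}$. Now the residue sequence for $D$ together with $H^0(\Omega^1_W) = 0$ identifies $H^0(\Omega^1_W(\log D))$ with the kernel of the Chern-class map $\bigoplus_i \C \to H^1(W, \Omega^1_W)$, $1_{C_i} \mapsto c_1(\O_W(C_i))$. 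Hence the residue vector of $s_A\omega$ produces a linear relation
\[
\sum_{\substack{i > m \\ C_i \cap A = \emptyset}} c_i r_i \, [C_i] = 0 \quad \text{in } \mathrm{NS}(W) \otimes \C,
\]
and the numerical independence of $\{C_{m+1}, \ldots, C_n\}$ forces every $r_i$ to vanish. Consequently $\omega \in H^0(\Omega^1_W(\log A) \otimes K_W) = 0$, completing the argument.

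The main obstacle I expect is the last step: carefully tracking the residues of $s_A\omega$ across the three regimes of components (contained in $A$, meeting $A$ transversally, and disjoint from $A$), and verifying that the connecting map in the logarithmic residue sequence is genuinely the Chern-class map, so that the numerical-independence hypothesis bites. Once that is in place the rest is essentially formal, with the only nontrivial input being the observation that $K_W + A \sim 0$ makes $\Omega^1_W(\log A)$ self-dual.
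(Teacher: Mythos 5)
Your proof is correct, and while it shares the paper's skeleton---Serre duality to reduce to $H^0(W,\Omega^1_W(\log D)\otimes\O_W(K_W))=0$, followed by a logarithmic residue sequence whose connecting map is the Chern class map, made injective by numerical independence---the way you exploit $C_1+\cdots+C_m\sim -K_W$ is genuinely different. The paper absorbs the twist by $K_W$ into the anticanonical components via the elementary sheaf inclusion $\Omega^1_W(\log(D'+C))\hookrightarrow\Omega^1_W(\log D')\otimes\O_W(C)$, applied $m$ times; this reduces the whole statement in one stroke to $H^0(W,\Omega^1_W(\log(C_{m+1}+\cdots+C_n)))=0$, with no case analysis and with residues that are honest constants. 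You instead keep the log poles along $A=C_1+\cdots+C_m$, kill $H^0(W,\Omega^1_W(\log A)\otimes\O_W(K_W))$ by observing that $\Omega^1_W(\log A)$ has trivial determinant and is therefore self-dual, and then analyze the residues along the remaining components in two regimes: a degree argument when $C_i$ meets $A$, and multiplication by the anticanonical section $s_A$ followed by the Chern-class argument when $C_i$ is disjoint from $A$. All the individual steps check out (the $\O_W$-linearity of the residue maps justifies the formula $\mathrm{res}_{C_i}(s_A\omega)=(s_A|_{C_i})\,r_i$, and the dichotomy $C_i\cdot A>0$ versus $C_i\cap A=\emptyset$ is exhaustive since the $C_i$ with $i>m$ are not components of $A$). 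Your version uses marginally less---numerical independence is only needed for the components disjoint from $A$---at the cost of the case analysis and the $s_A$-trick; the paper's inclusion trick is shorter and uniform. Incidentally, the same inclusion applied to $A$ alone gives $\Omega^1_W(\log A)\otimes\O_W(K_W)\hookrightarrow\Omega^1_W$, which would let you replace your self-duality argument for the auxiliary vanishing by the hypothesis $H^0(W,\Omega^1_W)=0$ directly.
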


\begin{proof}
By Serre's duality, we need to prove that $$H^0(W,\Omega_W^1(\log(C_1+\ldots+C_n))\otimes \O_W(K_W))=0.$$ Notice that we have the natural {\small $$\Omega_W^1 \big(\log( \sum_{i=j}^n C_i) \big) \otimes \O_W(\sum_{i=1}^{j-1}C_i + K_W) \hookrightarrow \Omega_W^1 \big(\log( \sum_{i=j+1}^n C_i) \big) \otimes \O_W(\sum_{i=1}^{j}C_i + K_W),$$}and so by the hypothesis $C_1+\ldots+C_m \sim -K_W$, we want to show $H^0(W,\Omega_W^1(\log( C_{m+1}+ \ldots +C_n)))=0$. The long exact sequence in cohomology of the residue short exact sequence $$0 \to \Omega_W^1 \to \Omega_W^1(\log( C_{m+1}+ \ldots +C_n)) \to \bigoplus_{i=m+1}^n \O_{C_i} \to 0 $$ gives the Chern class map $\bigoplus_{i=m+1}^n H^0(C_i,\O_{C_i}) \to H^1(W,\Omega_W^1)$ (cf. \cite[pp.454--462]{GH}), and since we are assuming that $\{C_{m+1}, \ldots, C_n\}$ are numerically independent, this map is injective. The extra assumption $H^0(W,\Omega_W^1)=0$ finishes the proof.
\end{proof}

We present two examples to illustrate.

\begin{example} This is a way to produce a degeneration of a nodal rational plane curve of degree $d$ to $d$ lines in general position. Let $\{L_1, \ldots, L_d \}$ be $d$ lines in general position in $\P^2$. We assume $d \geq 6$ (this allows as a short uniform argument, this can be adapted for $d<6$). We blow-up $\sigma \colon W \to \P^2$ all the ${d \choose 2}$ intersection points of this line arrangement. Let $E_1$, $E_2$, $E_3$ be the exceptional curves over the three nodes  of the triangle $L_1$, $L_2$, $L_3$. Then $-K_W \sim E_1+E_2+E_3+L_1+L_2+L_3$ and $\{L_4, \ldots, L_d\}$ are numerically independent. Thus by Prop. \ref{obstr2}, $H^2(W,T_W(-\log(E_1+E_2+E_3+L_1+\ldots+L_d)))=0$. We can delete the $(-1)$-curves $E_1$, $E_2$, $E_3$ to have $H^2(W,T_W(-\log(L_1+\ldots+L_d)))=0$. Notice that $L_i^2=2-d \leq -4$. Let $F_i$ be the exceptional curve between $L_1$ and $L_i$, $i=2,\ldots,d$. We blow-up $d-6$ times over $L_1 \cap F_i$ to obtain a Wahl configuration $\W_i$ $[2,\ldots,2,d-2]$ for each $i$. Adding-deleting $(-1)$-curves (see \cite[\S4]{PSU13}) keeps obstruction zero. If $L_1=:\Gamma_0$, then we are in the situation of Prop. \ref{obstr1}, and so we have such a W-surface $X$. The surface $X$ has $d-1$ Wahl singularities of the same type. The curve $\Gamma_0$ is $D_i$ in Prop. \ref{freqFlip}. We perform $d-1$ W-flips using the $d-1$ $(-1)$-curves over the $L_i \cap F_i$'s, obtaining a smooth W-surface $X'$ with the $\Gamma_t=\P^1$ degenerating to $\sum_{i=1}^d L_i$. To obtain the planar degeneration, we blow-down the $(-1)$-curves corresponding to the $L_i \cap L_j$ with $i,j$ different than $1$. There are ${d \choose 2}-(d-1)=\frac{(d-1)(d-2)}{2}$ of them. Each of them intersect $\sum_{i=1}^d L_i$ at two distinct points, and so the corresponding $(-1)$-curve and $\Gamma_t$ intersect t two distinct points, this is, each creates a node for $\Gamma_t$. After we blow down all of them, we arrive to $\P^2$, and we have the degeneration we wanted.
\label{lines}
\end{example}

\begin{example}
Fix an integer $n\geq 1$. In this example we degenerate a rational curve of degree $3(n+1)$ with $9$ singular points of multiplicity $n+1$ and one node to two general nodal cubics, one with multiplicity $1$, and the other with $n$. We start with two such nodal cubics $C_1,C_2$. We blow up the $9$ base points and the two nodes; let $Z$ be the surface. Let $E_i$ be the $(-1)$-curve from the node of $C_i$. Using similar arguments as in Prop. \ref{obstr2}, we can show that $H^2(Z,T_Z(-\log(C_1+C_2)))=0$. We now blow up $n-1$ times over one point of $C_2 \cap E_2$ to obtain the Wahl configuration $[2,\ldots,2,n+4]$ in a surface $Z'$. Let $F_1=E_2, \ldots, F_{n-1}$ be the $(-2)$-curves in a chain, so $F_1 \cap C_2 \neq \emptyset$. Then $H^2(Z',T_{Z'}(-\log(C_1+C_2+F_1+ \ldots + F_{n-1})))=0$. We contract $C_2+F_1+ \ldots + F_{n-1}$ to obtain $X'$, and use Prop. \ref{obstr1} with $\Gamma_0=C_1$ to produce a W-surface $X$ with the divisor $\Gamma$. We now W-flip one of the $9$ exceptional curves of the $9$ base points between $C_1$ and $C_2$, and then we W-flip $F_n$, $F_{n-1}$, ..., $F_2$ using the simple flips of Prop. \ref{freqFlip} (see Figure \ref{f2}).

\begin{figure}[htbp]
\includegraphics[width=10cm]{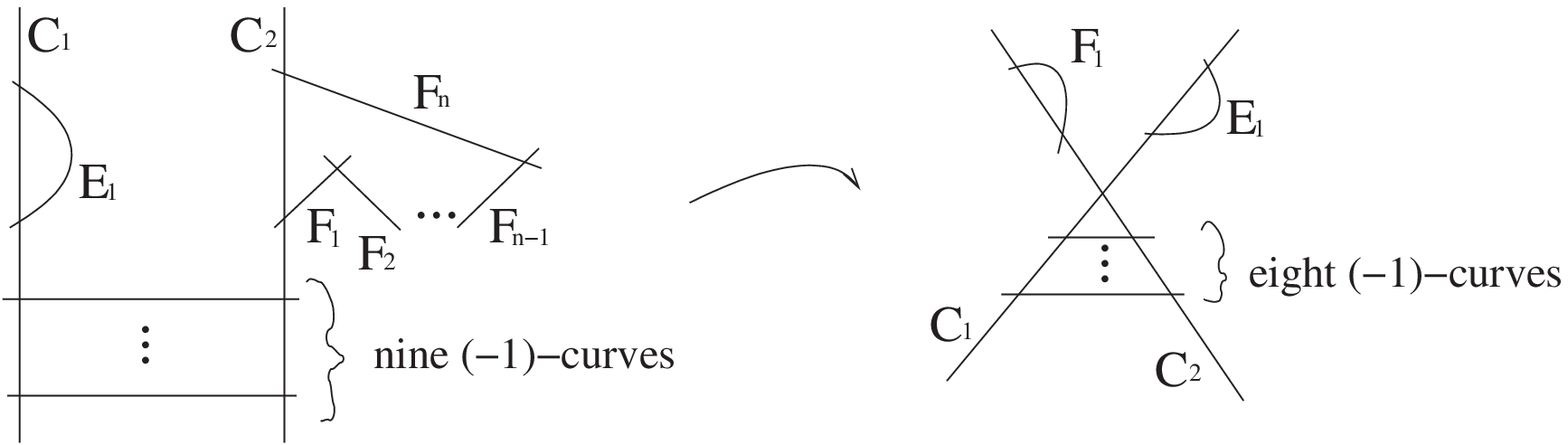}
\caption{}  \label{f2}
\end{figure}

One can compute that $\Gamma_t$ degenerates to $C_1+nC_2+(n-1)F_1$. This corresponds to a nonreduced degeneration of $\Gamma_t$. For a model in $\P^2$ of this deformation, we blow-down $E_1$, $F_1$, and the $8$ remaining $(-1)$-curves from the $9$ base points. Using intersection numbers, we see that $E_1$ in $X_t$ is a $(-1)$-curve intersecting transversally at two points of $\Gamma_t$, and for the other $9$ $(-1)$-curves (including $F_1$), we obtain exceptional $(-1)$-curves intersecting $\Gamma_t$ with multiplicity $n+1$. Hence in $\P^2$ the curve $\Gamma_t$ has degree $3(n+1)$ and the claimed multiplicities for its singularities.
\label{halphen}
\end{example}

\section{Explicit rational examples of general type} \label{s4}

We are going to give four examples where birational $\Q$-Gorenstein deformations appear when studying the boundary of the KSBA compactification of the moduli space $\overline{M}$ of simply connected surfaces of general type with $p_g=0$ and $K^2=1$. We point out that the choice of the invariants is irrelevant for the techniques.

\subsection{Irreducible septic associated to a $\frac{1}{36}(1,5)$} In this example we show how to find explicitly a curve in $\P^2$ associated to a boundary divisor in $\overline{M}$ which parametrizes rational surfaces with one fixed Wahl singularity. For that we use a further degeneration on two or more Wahl singularities; this is the case in all examples analyzed in \cite{Urz13}.

\begin{figure}[htbp]
\includegraphics[width=8cm]{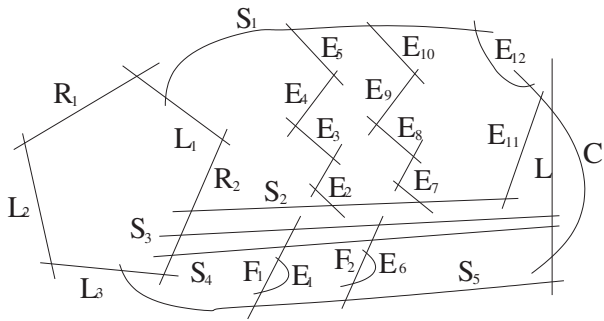}
\caption{Elliptic fibration with singular fibers $I_5,I_2,5I_1$}  \label{f4}
\end{figure}

Here we start with the example in \cite[Fig.6]{LP07}. We consider that example on a more general elliptic fibration. Let us take the plane configuration in Figure \ref{f4}: $L_1,L_2,L_3,L$ are lines with a triple point $L \cap L_1 \cap L_2$, and $C$ is a conic forming a triple point at $L_1 \cap L_3$. All other intersections are general. The pencil $\{a \cdot C\cdot L + b \cdot L_1 \cdot L_2 \cdot L_3 \colon [a,b] \in \P^1\}$ defines an elliptic fibration with singular fibers $I_5,I_2,5I_1$ as shown in Figure \ref{f4}. In that figure we drew two $I_1$'s and five sections which will be used later. Notice that they exists with the shown intersections. We blow-up twelve times this elliptic fibration to obtain a surface $Z$ and the configuration in Figure \ref{f5}. The corresponding exceptional divisors are $E_1,\ldots,E_{12}$, we blew-up in that order. The five Wahl configurations in \cite[Fig.6]{LP07} correspond to $S_1,L_1,R_1,L_2,L_3=[8,2,2,2,2]$ (numbers are the corresponding minus self-intersections), $F_1,E_3,E_4=[6,2,2]$, $F_2,E_8,E_9=[6,2,2]$, $S_2=[4]$, and $C=[4]$. Let $\sigma \colon Z \to X$ be the contraction of these five configurations to a normal projective surface $X$ (with five Wahl singularities). As in \cite{LP07}, this surface $X$ has no local-to-global obstructions to deform, and $X$ has nef canonical class. Hence W-surfaces $X$ exist, and have general fiber a simply connected surface of general type with $K^2=1$ and $p_g=0$.

Let $X'$ be the minimal resolution of the singularity $[8,2,2,2,2]$ of $X$. Consider a W-surface $X'$ keeping the configuration $[8,2,2,2,2]$. The contraction of this configuration in the general fiber gives a surface with one Wahl singularity in the corresponding boundary divisor of $\overline{M}$. We now perform W-flips on the W-surface $X'$ to find the minimal model of its general fiber. It turns out to be rational, and so we instead find a way to blow-down enough $(-1)$-curves to arrive to $\P^2$,  keeping track of the configuration $[8,2,2,2,2]$. Let $\Gamma_t$ be the $(-8)$-curve in this configuration.

\begin{figure}[htbp]
\includegraphics[width=9cm]{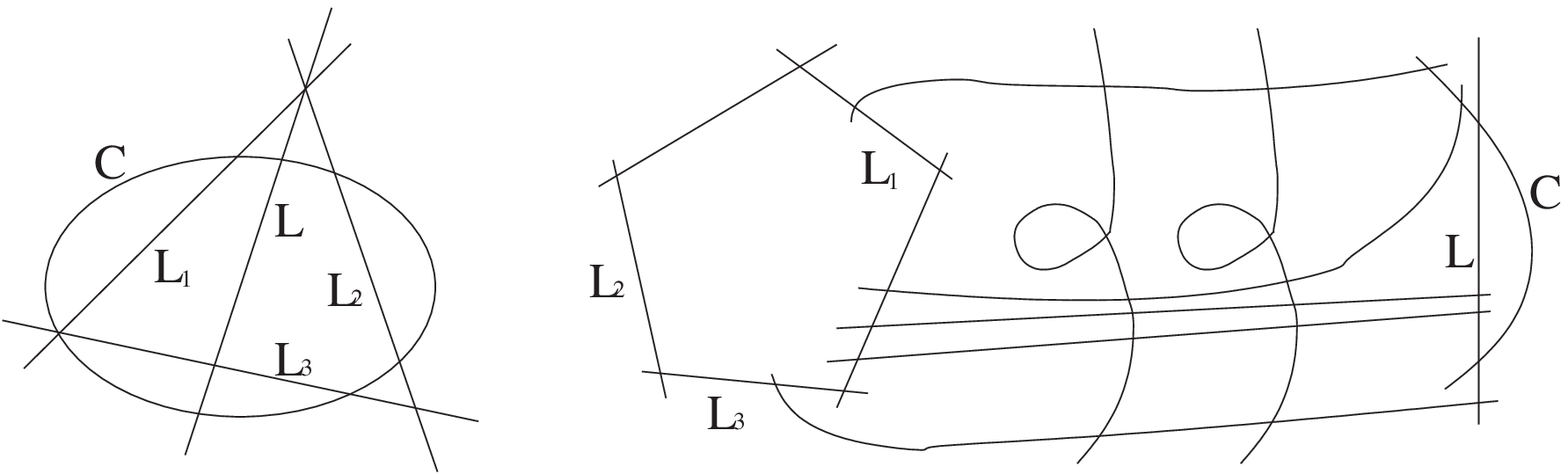}
\caption{}  \label{f5}
\end{figure}

We first flip the curve $E_5$ (in the threefold of the W-surface $X'$). This curve is as in Prop. \ref{freqFlip}, and so the flipped W-surface $X'$ (abuse of notation) has flipped curve $F_1$ and $X'$ has only three singularities. Now we W-flip the curve $E_{10}$ producing an analog situation: $F_2$ is the flipped curve and we have two Wahl singularities $\frac{1}{4}(1,1)$. We now W-flip $E_{12}$ and then $E_{11}$, the resulting W-surface $X'$ is now smooth. This shows that the general fiber is indeed rational. By a repeated use of Prop. \ref{freqFlip}, we compute that $\Gamma_t$ degenerates to $$\Gamma_0=S_1+F_1+F_2+C+S_2.$$ The surface $X'$ has $K_{X'}^2=1-5=-4$, and to get into $\P^2$ is enough to blow-down $13$ times $(-1)$-curves. In the W-surface $X'$ this corresponds to trivial divisorial contractions driving the general fiber to $\P^2$ as well. The $13$ curves we are going to blow-down in $X'$ are $S_5$, $L_3$, $L_2$, $R_1$, $L_1$, $E_2$, $E_7$, $S_2$, $C$, $S_3$, $S_4$, $E_1$, and $E_6$. Each of them induces a blow-down in the general fiber, whose exceptional divisors are shown in Figure \ref{f6}, we use same letters. This is found by intersecting these curves with $\Gamma_0$, then we know the intersection with $\Gamma_t$. The curves $S_2$ and $C$ may be as in Figure \ref{f6} or tangent to the $(-8)$-curve (double points anyway).

\begin{figure}[htbp]
\includegraphics[width=8cm]{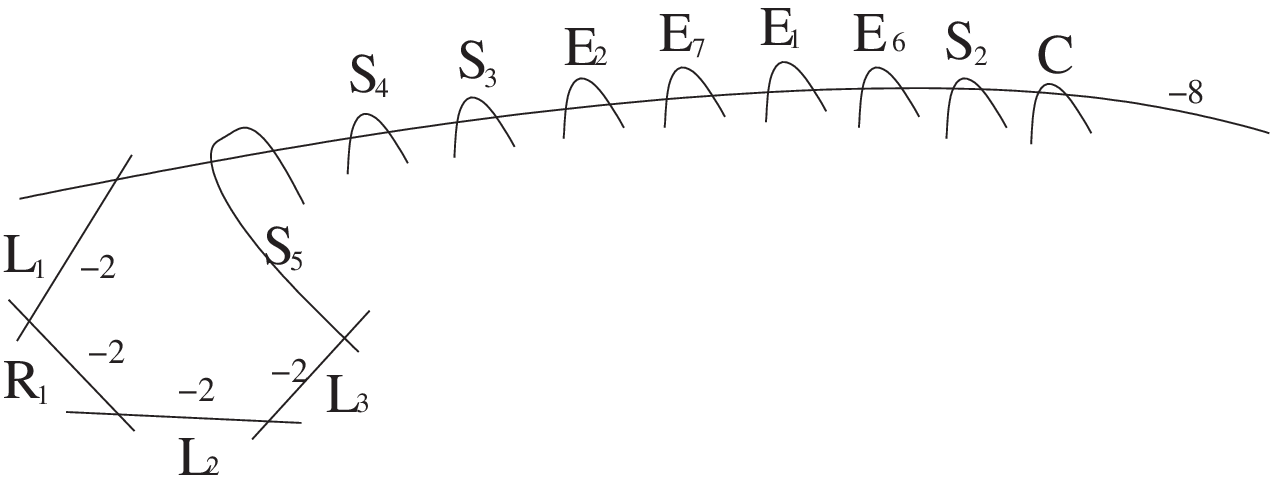}
\caption{}  \label{f6}
\end{figure}

We are now in good shape to blowing down the $13$ divisors. The image of the configuration of curves in $X'$ becomes two nodal cubics $F_1$ and $F_2$ together with a line $S_1$. The two cubics are tangent at two points $P$ and $Q$ with multiplicity $5$ and $2$, and the line passes through $P$ and $Q$ transversally to each cubic. All other intersections are nodes. For the image of $\Gamma_t$ we obtain an irreducible septic with $8$ double points, and one $D_{10}$ singular point (i.e. locally of the type $\{x(y^2+x^8)=0\} \subset \C^2$).

Therefore, the degeneration in the boundary of $\overline{M}$ gives, up to birational (discrete) flips and divisorial contractions, a degeneration (continuous) of the above irreducible septic into the above reducible (but reduced) septic. The irreducible septic represents (up to birational transformations) a boundary divisor (of dimension $7$) in $\overline{M}$ associated to the singularity $[8,2,2,2,2]$.

\subsection{Degree $15$ curve for a $\frac{1}{16}(1,3)$ and nonreduced degeneration} In the previous example, and the examples below, we will see septics showing up. We chose those examples to put things simple, but naturally higher degree curves appear degenerating to a reducible septics with multiplicities on certain components. For instance, take the above example and work out the singularity $[6,2,2]$ (for the $[4]$ we do not obtain rational surfaces but Dolgachev $(2,3)$ surfaces; see \cite[Prop.2.3]{Urz13}). Say $[6,2,2]$ is $F_1+E_3+E_4$. We do what we did above for $[8,2,2,2,2]$ but now for $[6,2,2]$. Then, after some flips we get a smooth W-surface, and if we blow-down to $\P^2$ in the same way as before, we get an irreducible (rational) curve of degree $15$. This curve, which is the image of the $(-6)$-curve of $[6,2,2]$, degenerates to $3S_1+F_1+3F_2$, this is, degenerates to the same septic configuration above (two nodal cubics and a line) but now with multiplicities.

\subsection{One septic degeneration for two distinct deformations} Here we produce two distinct degenerations of stable surfaces in the boundary of $\overline{M}$ which correspond to the same degeneration of plane curves. This is, the continuous part could be seen as the same (but the discrete part ($3$-fold birational transformations) is of course different).

\begin{figure}[htbp]
\includegraphics[width=10cm]{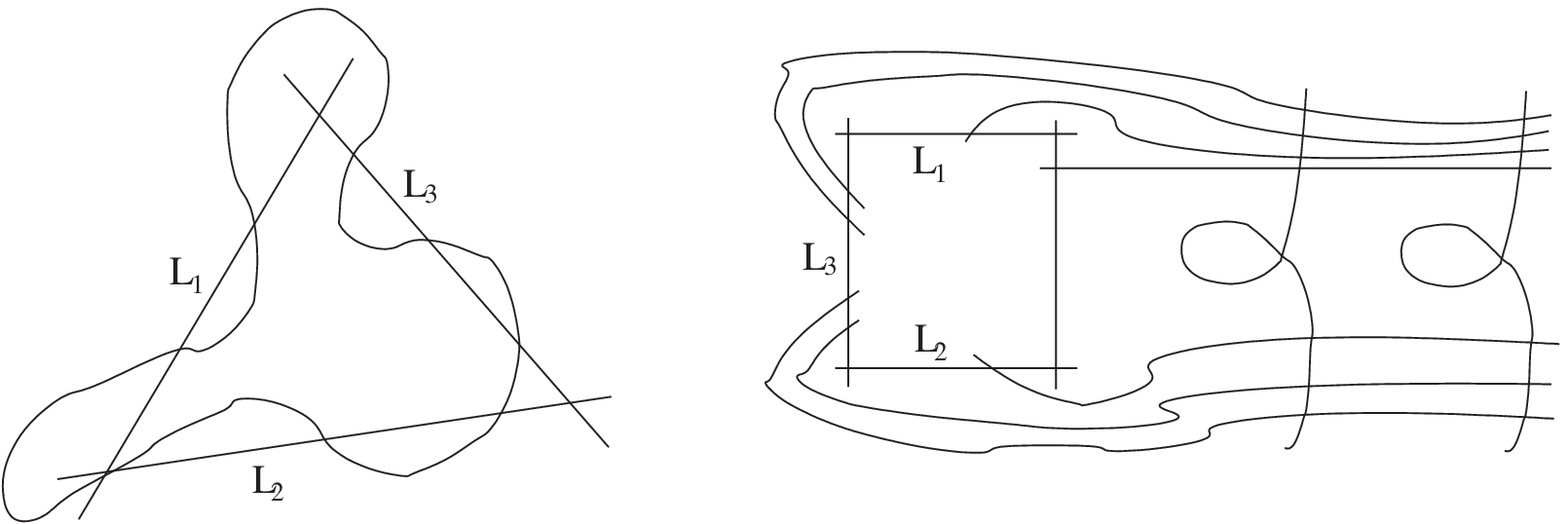}
\caption{}  \label{f7}
\end{figure}

We start with a triangle $L_1, L_2, L_3$ and a general cubic passing transversally through $L_1 \cap L_2$ as in Figure \ref{f7}. The triangle and the cubic defines a pencil which induces an elliptic fibration with sections whose singular fibers are $I_4$, and $8$ $I_1$'s. In Figure \ref{f7} we show the $I_4$, two chosen $I_1$'s and $7$ chosen sections. We blow-up on this configuration in two different ways, obtaining surfaces $Z_i$ with $i=1,2$. For $Z_1$ we blew-up $11$ times, for $Z_2$ $13$ times. The corresponding exceptional divisors are $E_i$ and $G_i$ respectively, the subindex $i$ indicates the order of the blow-ups. This is shown in Figure \ref{f8}. We name other irreducible curves with $S_i$, $F_i$, and $R$ as shown.

\begin{figure}[htbp]
\includegraphics[width=12cm]{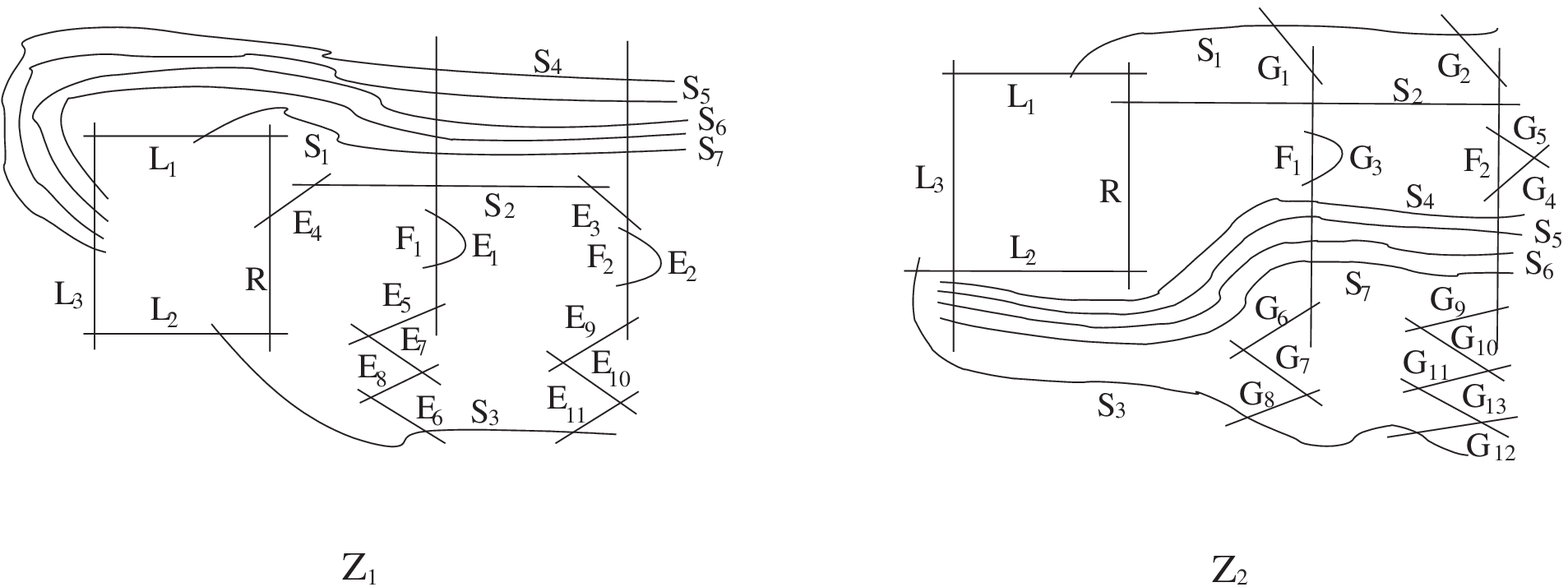}
\caption{}  \label{f8}
\end{figure}

We consider the Wahl configurations $L_1$,$R$,$L_2$,$S_3$,$E_6=[2,3,2,6,3]$, $S_2$,$F_1$,$E_5$,$E_7=[3,5,3,2]$, and $F_2$,$E_9$,$E_{10}=[6,2,2]$ in $Z_1$; and $S_1$,$L_1$,$R$, $L_2$,$S_3$,$G_{12}=[3,2,2,2,8,2]$, $F_1$,$G_6$,$G_7=[6,2,2]$, and $G_4$,$F_2$,$G_9$,$G_{10}$,$G_{11}=[2,7,2,2,3]$ in $Z_2$. Let $\sigma_i \colon Z_i \to X_i$ be the contraction of the Wahl configurations. Then, one can prove using usual techniques (as in Prop. \ref{obstr2}) that there are no local-to-global obstructions to deform $X_i$. For both cases, we can check that any $\Q$-Gorenstein smoothing produces simply connected surfaces of general type with $K^2=1$ and $p_g=0$. Let $X_0^i$ be the minimal resolution of the singularity $[2,3,2,6,3]$ for $X_1$ and $[3,2,2,2,8,2]$ for $X_2$. We consider W-surfaces $X_0^i$ which keep the corresponding exceptional divisors. Again the general fiber is the minimal resolution of stable surfaces living in two different boundary divisors of $\overline{M}$ labeled by the singularities $[2,3,2,6,3]$ and $[3,2,2,2,8,2]$. \footnote{Although in general a Wahl singularity could label two distinct divisors in $\overline{M}$.}

Let $\Gamma_t^i$ be the divisor in the $3$-fold corresponding to either the $(-6)$-curve for $i=1$ or the $(-8)$-curve for $i=2$. Let $E_{6,t}$ be the divisor corresponding to $E_6$ in $X_0^1$, and let $G_{12,t}$ be the divisor corresponding to $G_{12}$ in $X_0^2$. We now proceed to apply flips to the W-surface $X_0^i$ to find some nicer model for these deformations. We apply $3$ W-flips of the type Prop. \ref{freqFlip} for each $i$. For $X_0^1$ we flip $E_8$, $E_6$, $E_{11}$ in that order, and for $X_0^2$ we do it with $G_8$, $G_{13}$, $G_{12}$. In both cases we obtain smooth W-surfaces $Y_0^i$. We have that $$ \Gamma_0^1= F_1 + F_2 + S_3 \ \ \ \Gamma_0^2= F_1 + F_2 + S_3 \ \ \ E_{6,0}=S_2 \ \ \ G_{12,0}=G_4.$$

We now blow-down the $13$ curves $E_3$, $E_4$, $S_2$, $S_1$, $L_1$, $S_3$, $L_2$, $E_1$, $E_2$, $S_4$, $S_5$, $S_6$, $S_7$ in $Y_0^1$. Similarly, we blow-down the $14$ curves $G_5$, $G_4$, $G_3$, $S_4$, $S_5$, $S_6$, $S_7$, $G_1$, $G_2$, $S_1$, $L_1$, $S_2$, $S_3$, and $L_2$. The configuration of these exceptional curves (induced by the corresponding divisors) in $X_t^i$ is shown in Figure \ref{f9}. Notice that when we blow them down, in both cases we obtain for $\Gamma_t^i$ a sextic in $\P^2$ with $8$ nodes and one tacnode such that two nodes and the tacnode are colinear via the (transversal) line $R$. For $Y_0^i$ we obtain two nodal cubics intersecting at two tacnodes and $5$ other points, and a line $R$ so that the two tacnodes intersections, and one more (nodal) intersection are colinear by $R$. Thus, in both cases, an irreducible sextic plus a line degenerate to these two nodal cubics plus a line.

\begin{figure}[htbp]
\includegraphics[width=12cm]{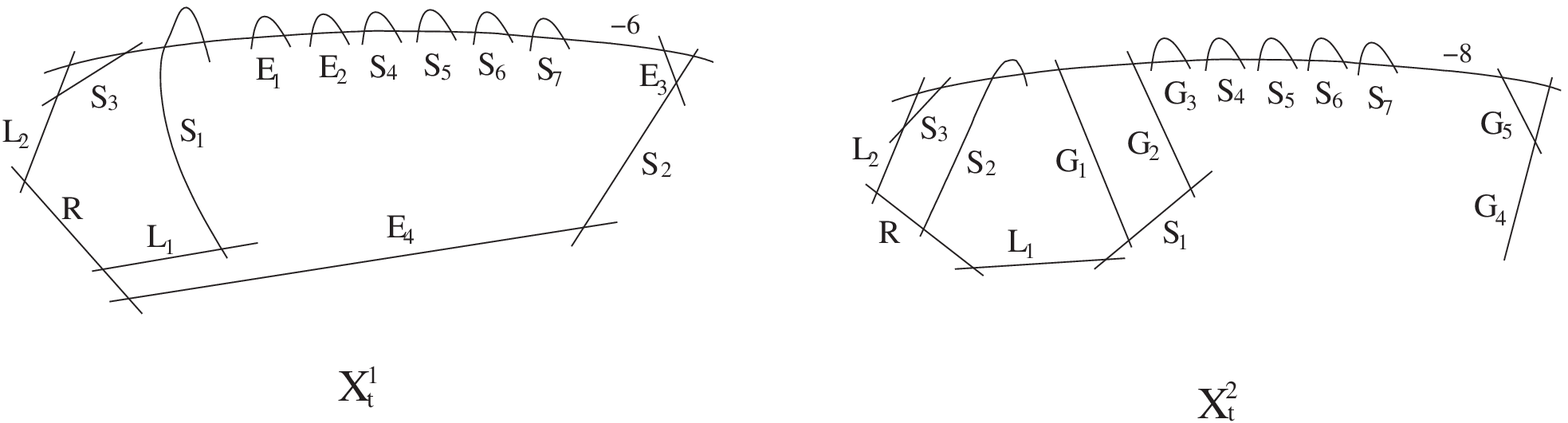}
\caption{}  \label{f9}
\end{figure}

\subsection{A maximally singular stable surface via degenerations} In this example we show how to obtain a maximally degenerated surface which has eight Wahl singularities (this is the maximum possible for an unobstructed surface), from a simple configuration of two nodal cubics and a line. That configuration ends up representing a stable surface with one Wahl singularity $\frac{1}{24^2}(1,119)$. The corresponding smoothing is simply connected with $K^2=1$ and $p_g=0$. We present this example the other way around, starting with the maximally degenerated surface, and then prove that a partial $\Q$-Gorenstein smoothing keeping only that Wahl singularity gives the simple septic above.

We start with seven lines in very special position. The lines $L_1$,...,$L_6$ form a complete quadrilateral, and $L_7$ is the line passing through $L_2 \cap L_4$ and $L_3 \cap L_6$. This is in Figure \ref{f10}. We consider the pencil generated by $L_1,L_2,L_4$ and $L_3,L_5,L_6$ which gives an elliptic fibration with singular fibers: $2I_5+2I_1$. The line $L_7$ is a particular triple section (see Figure \ref{f10}); the curves $T_1,\ldots,T_4$ come from triple points as shown.

\begin{figure}[htbp]
\includegraphics[width=9cm]{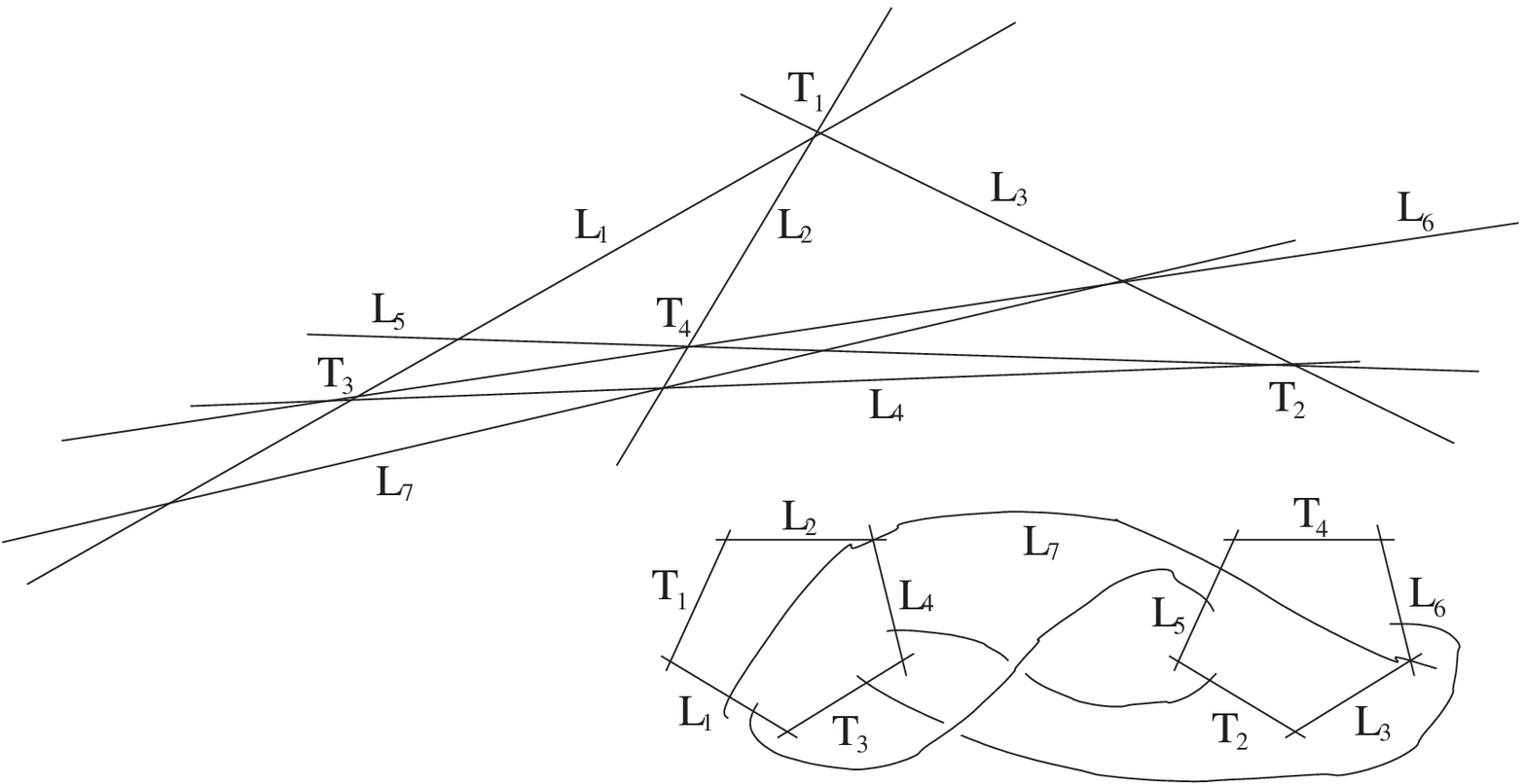}
\caption{}  \label{f10}
\end{figure}

We now blow up $26$ times obtaining a surface $Z'$. The $26$ exceptional divisors are denoted by $E_1,\ldots,E_{26}$ and again the subindex indicates the order of the composition of blow-ups. This is in Figure \ref{f11}. A nonsingular surface $Z$ is obtained by blowing down the sections $S_2$ and $S_3$ in Figure \ref{f11}. In $Z$ we have eight Wahl configurations: $L_1$,$L_5$,$T_4$,$L_6$,$T_3$,$E_{11}$,$E_{12}$,$E_{13}$ $=[5,7,2,2,3,2,2,2]$, $T_2$,$E_{19}$,$E_{20}$,$E_{21}$,$E_{22}$ $=[8,2,2,2,2]$, $L_4$,$S_1$,$E_{15}$,$E_{16}$,$E_{17}$ $=[8,2,2,2,2]$, $T_1$,$E_8$ $=[5,2]$, $L_2$,$E_7$ $=[5,2]$, $E_1=[4]$, $L_3=[4]$, and $L_7=[4]$. Let $\sigma \colon Z \to X$ be the contraction of these eight configurations.

\begin{figure}[htbp]
\includegraphics[width=10cm]{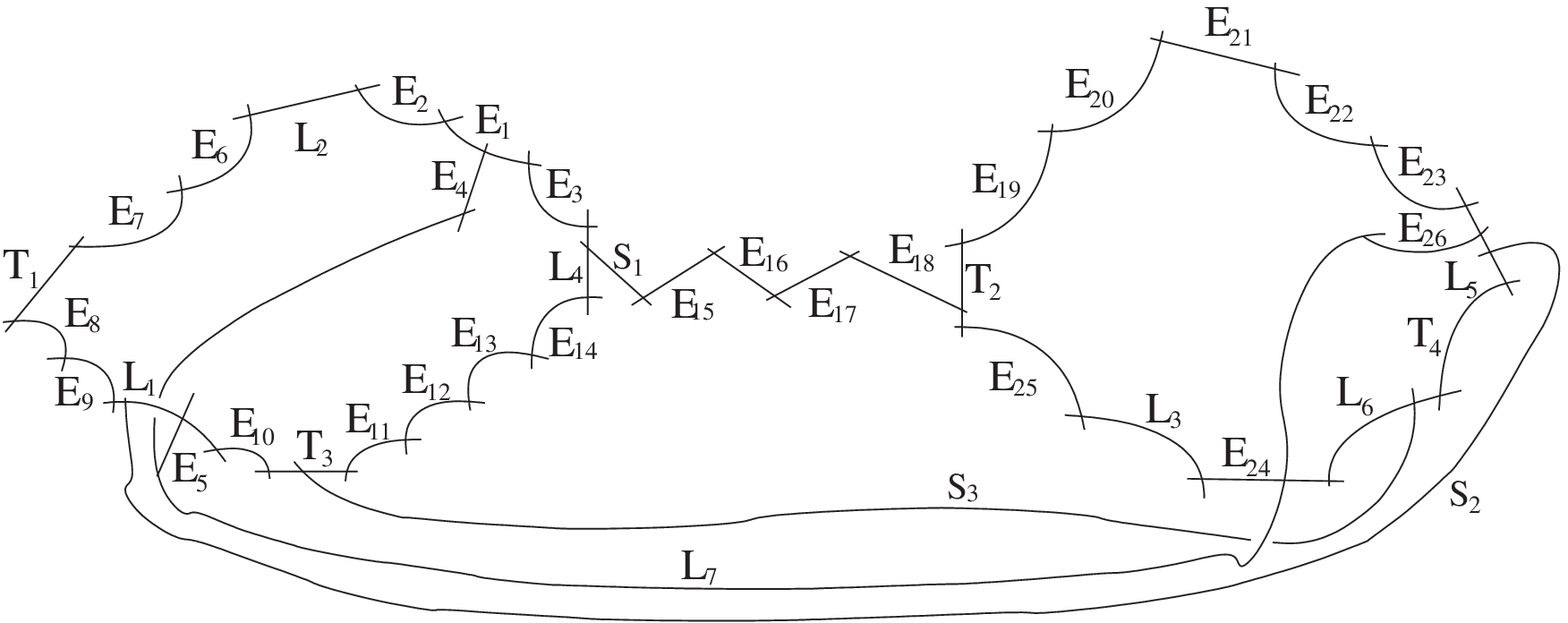}
\caption{}  \label{f11}
\end{figure}

A sketch of the proof for no local-to-global obstructions for $X$ goes as follows. We start with the minimal elliptic fibration $Y \to \P^1$ with singular fibers $2I_5+2I_1$. Then $H^2(Y,T_Y(-\log(I_5+I_5)))=0$ by Prop. \ref{obstr1}. We now add-delete $(-1)$-curves keeping this $H^2=0$. Notice that $L_7$ can be added as a $(-1)$-curve in some stage of the blow-ups (changing the given order). Thus this follows the usual procedure. Notice that $K_X^2=-26+2+25=1$.

To see that the canonical class of $K_X$ is nef, we use the trick in \cite{LP07}. We look at the resolution $f \colon Z' \to X$ and write down $f^*(K_X)$ in a $\Q$-numerically effective way. Then by intersecting with the curves in the support, one check it is nef. The two key points in the trick are (in our case): (1) to verify that the sums of the discrepancies of $L_6$ and $T_3$, and of $L_1$ and $L_5$ are smaller or equal than $-1$, (2) to verify that the discrepancies of $T_1$, $L_2$, $L_4$, $T_3$, $L_1$, $T_2$, $L_5$, $T_4$, $L_6$, and $L_3$ (the curves in the original $I_5$ fibers) have discrepancies smaller or equal than $-\frac{1}{2}$. Both (1) and (2) are true in our case. Therefore, the general fiber of a $\Q$-Gorenstein smoothing of $X$ is a surface of general type with $K^2=1$, $p_g=0$, and simply connected (for this, same strategy as in \cite{LP07}).

We now consider a minimal resolution $X' \to X$ of the singularity $[5,7,2,2,3,2,2,2]$, and a W-surface $X'$ keeping the configuration $[5,7,2,2,3,2,2,2]$. Let $\Gamma_{t,1}$, $\Gamma_{t,2}$, and $\Gamma_{t,3}$ be the divisors in the $3$-fold of the curves $L_1$, $L_5$, and $L_6$ in $X'$ respectively. We will perform seven W-flips of the type Prop. \ref{freqFlip} through the curves (and in that order): $E_9$, $E_7$, $E_2$, $E_{26}$, $E_{23}$, $E_{18}$, and $E_5$. We obtain at this point a smooth W-surface $X'$ (by abuse of notation). Then we have $$\Gamma_{0,1}=L_1+L_7+T_1+L_2+E_1 \ \ \ \Gamma_{0,2}=L_5+T_2+L_4+L_3 \ \ \ \Gamma_{0,3}=L_6$$ which already says that if we continue blowing down from $X'$ to arrive into the seven special lines, then the configuration $[5,7,2,2,3,2,2,2]$ in the general fiber blows down to two nodal cubics $\Gamma_{t,1}$ and $\Gamma_{t,2}$, and a line $\Gamma_{t,3}$. The cubics intersect transversally at nine points and three of them are collinear via $\Gamma_{t,3}$. Thus this reducible septic gives a surface with the Wahl singularity $[5,7,2,2,3,2,2,2]$, and degenerates maximally to a rigid configuration of seven lines which gives a surface with eight Wahl singularities. Finally, we observe that the surface with the one Wahl singularity $[5,7,2,2,3,2,2,2]$ specializes to the example in \cite[Sect.6]{PSU13}.


\vspace{0.3cm}

{\small Facultad de Matem\'aticas,

Pontificia Universidad
Cat\'olica de Chile,

Santiago, Chile.}

\end{document}